\newcommand{\R}{\mathbb{R}}
\newcommand{\N}{\mathbb{N}}
\newcommand{\calF}{\mathcal{F}}
\newcommand{\calW}{\mathcal{W}}
\newcommand{\calP}{\mathcal{P}}
\newcommand{\abs}[1]{\left\lvert #1 \right\rvert}
\newcommand{\norm}[1]{\left\lVert #1 \right \rVert}
\newcommand{\set}[1]{\left\{ #1 \right\}}
\newcommand{\bbE}{\mathbb{E}}
\newtheorem{assumption}{Assumption}
\newtheorem{Remark}{Remark}
\newtheorem{Lemma}{Lemma}
\newtheorem{Theorem}{Theorem}
\newtheorem{proof}{proof}
\title{Feedback Loops in Opinion Dynamics of Agent-Based Models with Multiplicative Noise}
\author[a]{Nata\v sa Djurdjevac Conrad\footnote{natasa.conrad@zib.de}}
\author[a,c]{Jonas K\"oppl}
\author[b]{Ana Djurdjevac}
\affil[a]{Zuse Institute Berlin, Berlin, Germany}
\affil[b]{Freie Universit\" at Berlin, Institut f\"ur Mathematik und Informatik, Berlin, Germany}
\affil[c]{Weierstrass Institute for Applied Analysis and Stochastics Berlin, Germany}
\begin{document}
\date{}

\maketitle

\abstract{We introduce an agent-based model for co-evolving opinion and social dynamics, under the influence of multiplicative noise. In this model, every agent is characterized by a position in a social space and a continuous opinion state variable. Agents' movements are governed by positions and opinions of other agents and similarly, the opinion dynamics is influenced by agents' spatial proximity and their opinion similarity. Using numerical simulations and formal analysis, we study this feedback loop between opinion dynamics and mobility of agents in a social space.
We investigate the behavior of this ABM in different regimes and explore the influence of various factors on appearance of emerging phenomena such as group formation and opinion consensus. We study the empirical distribution and in the limit  of infinite  number of agents  we derive  a corresponding reduced model given by a partial differential equation (PDE). Finally, using numerical examples we show that a resulting PDE model is a good approximation of the original ABM.
}

\textbf{Keywords:} opinion dynamics, feedback loop,  agent-based modeling, multiplicative noise, group formation, mean-field limit, empirical distribution, stochastic partial differential equations.

\section{Introduction}
Opinion dynamics is one of the most important processes of our society, as our opinions influence not only our individual actions and behavior; but can also shape collective dynamics governing societal change and social movement. Complex interaction patterns between individuals and coupled social mechanisms in different environments are shown to be the crucial drivers of opinion dynamics \cite{lewandowsky2020technology}. With the introduction of online social media, the way people interact and share their opinions has drastically changed. For example, physical proximity is now not anymore a constrain for communication, everybody can engage in information transmission and express their opinions to a large number of people in different social, political and cultural environment. Additionally, large amounts of data became available about how people influence and are getting influenced in their opinions \cite{porten2020effects}, which provided new insights into the social mechanisms and emerging phenomena such as formation of echo chambers and opinion consensus.\\

During the last decades, extensive research has been carried out in order to 
understand how people shape their opinions in their social space, see recent reviews \cite{Kertesz2022, OpDynReview2016}. 
Governed by an increasing amount of available large-scale social data and fast computational advances, the topic of opinion dynamics gathered an interdisciplinary research community \cite{Kertesz2022,holley1975ergodic}. Existing work ranges from the studies on (1) \textit{model-driven approaches} that produce formal models for opinion dynamics that can be analysed using theories from mathematics and statistical physics to (2) \textit{data-driven approaches} that are used to explore empirical data using knowledge from social sciences. Using computer simulations and computational analysis, opinion dynamics models can be used as a tool for understanding social mechanisms, uncovering social interaction patterns and exploring influences of various factors on e.g. group formation and opinion consensus \cite{Schweitzer2000,starnini_emergence_2016, Kertesz2022,kan_adaptive_2021}. Furthermore, mathematical description of models is a starting point that enables the use of analytical tools. In such a way we can get theoretical predictions of the models, such as: long time behaviour, limiting behaviour of the system when the number of agents tends to infinity (macroscale) and description of the fluctuations in the case when the number of agents is very big but still finite (mesoscale). In addition, this type of analysis is a basis for developing rigorous numerical analysis which implies error estimates that should be expected  by numerical  computations and which determine the choice of parameters in the model that should be used in the experiments. However, most existing models are rather simple and rarely connect to empirical studies and available real-world data \cite{stauffer2009}. In order to close this gap between model- and data-driven approaches, new formal models need to be introduced that can better represent real-world social systems and capture complex mechanisms that govern how people shape their opinions.\\

The largest group of formal models for studying opinion dynamics are agent-based-models (ABMs), where a process of opinion formation takes place through interactions between individual agents. One example of such ABMs are \textit{Voter models} \cite{ClifSudb1973}, that describe opinion change between agents with discrete opinion states, where agents interaction dynamics is defined through an underlying social network. In the DeGroot model \cite{degroot_reaching_1974}, agents have continuous-valued opinions that are formed as the average opinions of all other agents. Further mathematical literature mostly focuses on \textit{bounded confidence models} \cite{rainer_hegselmann_opinion_2002}, where the dynamics of opinions depends only on interactions among agents that have similar opinions, without assuming an underlying interaction network, but rather all-to-all possible interactions. Different extensions of these models have been considered in order to account for more realistic scenarios. For example, introducing  stochastic effects \cite{schweitzer2003brownian,pineda2013noisy, goddard2022noisy, wang2017noisy, Schweitzer2000, gomes2020mean}, complex interaction mechanisms  \cite{CroAnt12,Schweitzer2021} and different types of agents (e.g. stubborn agents, influencers, campaigners) \cite{milli2021opinion, hegselmann2015opinion}. Additionally, in the context of complex social networks, the co-evolution of the opinion and the network dynamics has been studied, where the changes in the network structure influence the opinion dynamics and vice versa. It has been shown that this co-evolution process in network models governs the appearance of emerging structures, e.g. echo chambers \cite{Yu2017}. However, most of the existing ABMs for opinion dynamics do not include the dynamics of agents in a social space, despite that this process determines agents' interaction patterns. Extending on the models for epidemic spreading \cite{MobileAgentsEpidemic2014} and cultural dissemination \cite{Centola2007,Vazquez2007}, recently the so-called \textit{mobile agents} \cite{Starnini2020, starnini_emergence_2016} were introduced to account for the feedback between the spatial movement of agents and the social contagion dynamics. While simulation results on co-evolving dynamics (both for network models and ABMs) provided many useful insights into the social mechanisms behind opinion formation, theoretical considerations of such models are still largely missing.\\
In this article, we introduce a mathematical ABM, that includes feedback loops in opinion dynamics of agents moving in a social space, influencing and being influenced in their opinions. The focus of this manuscript is on the non-trivial two-way interaction between the agents' movement in a social space and their opinion dynamics. More precisely, agents' spatial movements can induce changes in opinion states over time and additionally, opinion dynamics can influence the spatial position of agents and opinion states of agents in their vicinity. This feedback loop between spatial and opinion changes is at the core of the systems co-evolving dynamics. We consider that both spatial and opinion dynamics are governed by stochastic dynamics with multiplicative noise, which generalizes the case of constant, additive noise that is usually considered \cite{pineda2013noisy, goddard2022noisy, wang2017noisy, Schweitzer2000, gomes2020mean}. We explore the impact the feedback has on the behavior of the system and in particular on the grouping of agents in opinion and/or social space. 
From the mathematical point of view, our ABM can be seen as an interacting particle system. In particular, on the microscopic level (level of agents) we formulate our model as a system of coupled stochastic differential equations (SDEs) with multiplicative noise. From the application point of view, the interest is to have weak regularity assumptions on the drift and diffusion coefficients. Next, we study the corresponding limiting equation in the case when the number of agents tends to infinity, i.e. the so called McKean-Vlasov equation. Furthermore, we show the well-posedness results for the limiting system, which is a very popular and challenging topic in the field of stochastic analysis. Nowadays, there is extensive literature about these results,  the standard results are \cite{sznitman_topics_1991, gartner1988mckean, krylov2005strong} and see the references therein. Another interesting class of results that is discussed in the literature and that we also consider, is the so-called propagation of chaos, meaning that one wants to prove the convergence of the microscopic model to McKean-Vlasov SDEs. The challenge in these proofs lies in the weak assumptions on the coefficients. Since the topic of this article is not the theoretical investigation of the weak regularity assumption, in order to illustrate our message we concentrate on the simple case of Lipschitz bounded coefficients with multiplicative noise case.  Furthermore, due to the high computational cost of ABM simulations when the number of agents is large, we suggest the standard model reduction approach that considers instead the empirical density rather than each agent individually. We derive the formal equations of the empirical density and its so called hydrodynamic limit. These results are in the spirit of the standard, so called Dean-Kawasaki equation \cite{dean_langevin_1996}. In the setting of the social dynamics, this model reduction for the uncoupled system has already been considered in \cite{HelfmannDjurdjevacConradDjurdjevacetal.2021}. Using a numerical example, we illustrate the expected behaviour of the system on the macroscopic scale, that is given by the partial differential equation.

The article is organized as follows. Our agent-based model for opinion dynamics with feedback loops is introduced and studied through numerical simulations in Section \ref{sec:Model}. Next, we develop a theoretical framework for studying the system at the macroscopic level by an mean-field approach in Section \ref{sec:MeanField} and we present the well-posedness result of the McKean-Vlasov SDE system with Lipschitz coefficients and the convergence results of propagation of chaos. In Section \ref{sec:EmpiricalMeasure} we present formal derivation of the equation that describes the dynamics of the empirical measure and its hydrodyanamic limit. We illustrate the limiting behaviour of the system on the macroscopic scale, by a numerical example. Finally, we derive our conclusions and possible future directions in Section \ref{sec:Discussion}.

\section{Model Description}\label{sec:Model}
We consider a closed system of $N$ interacting agents and agents' co-evolving opinion and social dynamics. At time $t\in[0,T]$, every agent $k,\ k=1,\dots,N$ has a position state $X^k_t \in \R^d$ and an opinion state $\Theta^k_t\in \R$. The position state $X^k_t$ of an agent is a point in an abstract social space, such that a distance between two agents refers to their social proximity that is described by their social similarity. In real-world social systems, information about a position in social space may be inferred from e.g. online social media. The opinion $\Theta^k_t$ of an agent $k$ is considered to be a continuous variable. For more generality, this model can be extended to incorporate several opinion entities, such that $\Theta^k_t\in\R^m$. However, for technical simplicity in this paper we will assume that $m=1$.
The state of the system at time $t \geq 0$ for the set of $N$ agents is given by
\begin{equation*}
Z_t = (X_t, \Theta_t) \in (\R^d)^N \times \R^N,
\end{equation*}
where the $k$-th row of the systems' state corresponds to the state $Z^k_t = (X^k_t, \Theta^k_t)$ of the $k-$th agent. All agents follow the same rules that describe how their positions and opinions change. More precisely, agents move in a social space governed by the position of other agents and their opinions. Similarly, the opinion states of agents are influenced by both agents' spatial proximity and their opinions. This feedback loop between spatial and opinion changes determines the systems adaptive dynamics. Additionally, to be able to account for external influences on agents and the sometimes seemingly random nature of human interactions, we model this system via a coupled system of stochastic differential equations with multiplicative noise of the form
\begin{equation}\label{eq:diffusion_process_all}
\begin{cases}
    dX_t&= \tilde{U}(X_t,\Theta_t) dt  +\sigma_{sp}(X_t,\Theta_t) dB^{sp}_t, \\\
    d\Theta_t &= \tilde{V}(X_t, \Theta_t) dt + \sigma_{op}(X_t,\Theta_t) dB^{op}_t,  
\end{cases}
\end{equation}
where:
\begin{itemize}
    \item $\tilde{U}: (R^d)^N \times \R^N \to (\R^d)^N$ is a \textit{spatial interaction map} that models how the positions and opinions of the agents influence the spatial movement of the agents, 
    \item $\tilde{V}: (R^d)^N \times \R^N \to (\R^d)^N$ is an \textit{opinion interaction map} that models how the positions and opinions of the agents influence the opinion states of the agents, 
    \item $B^{sp}$ and $B^{op}$ are independent Brownian motions starting in $0$, 
    \item $\sigma_{sp}(X_t,\Theta_t), \sigma_{op}(X_t,\Theta_t)$ are diffusion coefficients for spatial and opinion dynamics respectively. 
\end{itemize}
Dynamics given by the SDEs in \eqref{eq:diffusion_process_all} is rather abstract and in this form it doesn't provide much intuition on how it can be adapted to known social mechanisms coming from real-world systems. Thus, in the following we will focus on how complex interaction patterns and stochastic influences can be enforced in our model.

\subsection{Pairwise interactions}
We start by exploring the simplest type of interaction dynamics, namely pairwise (or 2-body) interactions that only take into account interactions between pairs of agents. More precisely, we consider the case where the interaction maps $\tilde{U}, \tilde{V}$ are linear functions of simpler interaction maps $U, V$, i.e. we  define $\tilde{U}= (\tilde{U}_1,...,\tilde{U}_N): (R^d)^N \times \R^N \to (\R^d)^N$ with 
\begin{align}
    \tilde{U}_k(X, \Theta) = \frac{1}{N}\sum_{j=1}^N U(X^k, X^j, \Theta^k, \Theta^j), 
\end{align}
for some pair-interaction map $U: \R^d \times \R^d \times \R \times \R \to \R^d$ and analogously for $\tilde{V}$. In this model, agents shape their opinions based on the mean opinion of other agents through pairwise interactions, which is the setting of many of the classical models for opinion dynamics  \cite{degroot_reaching_1974,beckmann_interacting_2003,rainer_hegselmann_opinion_2002}. 
Thus, the dynamics of the $k$-th agent are given by
\begin{align}\label{eq:genSystem}
    \begin{cases}
    dX^k_t=  \frac{1}{N}\sum_{j=1}^N U(X^k_t,X^j_t, \Theta^k_t, \Theta^j_t) dt  +\sigma_{sp}(X_t,\Theta_t) dB^{sp,k}_t, \\\
    d\Theta^k_t = \frac{1}{N}\sum_{j=1}^N V(X^k_t,X^j_t, \Theta^k_t,  \Theta^j_t) dt + \sigma_{op}(X_t,\Theta_t) dB^{op,k}_t.  
    \end{cases}
\end{align}
Note that in order to simplify the notation, we do not write $X_t^{k,N}$, $\Theta_t^{k,N}$, but instead $X^k_t$,  $\Theta_t^k$ respectively. As a more concrete example one can consider the following model that can be seen as an extension of a classical model by DeGroot \cite{degroot_reaching_1974}, in a sense that it does not assume interactions between all agents, but instead assumes that two agents can only interact if their positions in a social space are closer than a certain interaction radius $R$, like in the bounded confidence models. The reason for such modeling decision is that agents that are further away in a social space, i.e. are having a low social similarity, may have conflicting attitudes and social norms, and thus lack motivation to interact with each other. An opinion interaction map that models this idea is 
\begin{align}\label{eq:OpModel}
    V(x_1, x_2, \theta_1, \theta_2) := \alpha\cdot 1_{[0,R]}\left(\norm{x_1 -x_2}\right)\cdot(\theta_2 - \theta_1),
\end{align}
where $\alpha$ is the \textit{opinion strength} parameter and $\norm{\cdot}$ refers to the Euclidean distance. In our model, $\alpha$ regulates the strength of the social influence on the agent's opinion, i.e. the higher the $\alpha$ the more influence the pairwise opinion difference has on an agent's opinion. As given by \eqref{eq:OpModel}, this model formalizes the idea that two agents can only interact if their positions in a social space are close enough and if they interact, then their opinions become more similar. Since we do not distinguish between individual agent types, we assume that $\alpha$ is a constant, i.e. it is equal for all agents in the system. In the literature there are different variations of this classical interaction dynamics \cite{friedkin_social_1999,Schweitzer2021}. As discussed above, our model extends these by introducing the movements of agents in a social space and its feedback loop with the opinion dynamics. An example of such dynamics is given by the spatial interaction map 
\begin{align}\label{eq:SpModel} 
    U(x_1, x_2, \theta_1, \theta_2) := \beta\cdot 1_{[0,R]}\left(\norm{x_1 - x_2}\right) \cdot \text{sgn}(\theta_1 \cdot \theta_2)\cdot (x_2 - x_1),
\end{align}
where $\beta$ denotes the \textit{spatial strength} parameter regulating how much agents compromise when updating their positions. The third term in \eqref{eq:SpModel} introduces a direction of agents' spatial motion based on their opinions, i.e. agents can either attract or repel each other, depending on whether their opinions are similar or different to each other. Thus, movement of an agent in a social space is determined by its spatial closeness and opinion similarity with other agents in the system.

\subsection{Multi-body interactions}

Most existing models of opinion dynamics only take pairwise interactions into account, ignoring any higher order interaction dynamics. While this simplifies mathematical considerations significantly, this is a very rough approximation of real life interactions and does not allow to model group effects such as peer pressure, see e.g. \cite{battiston_networks_2020, battiston_physics_2021}.
Recently there has been an increasing interest in going beyond pairwise interactions to create more realistic models for social dynamics, e.g. \cite{neuhauser_opinion_2021}. Our model can be extended with such effects by including the spatial interaction dynamics as introduced in the previous section by \eqref{eq:SpModel} and considering multi-body interactions for the opinion dynamics proposed in \cite{neuhauser_opinion_2021}. In particular, the effect of peer pressure within groups of agents in spatial proximity can be modeled by the opinion interaction map 
\begin{align}
    V(x_1, x_2, x_3, \theta_1, \theta_2, \theta_3) := \alpha \cdot \left(\prod_{i,j=1}^3 1_{[0,R]}(\norm{x_i - x_j}) \right) s(\abs{\theta_2 - \theta_3}) [(x_2 - x_1) + (x_3 - x_1)],
\end{align}
where $s: [0,\infty) \to \R$ is a non-increasing positive-definite function, e.g. $s(x) = \exp(\lambda x)$, for some $\lambda < 0$. This means, that the influence of agents $2$ and $3$ on the opinion of agent $1$ is stronger if they have similar opinions. Different from the classical DeGroot model, in a case when multi-body interactions are defined on hypergraphs, shifts in the average
opinion of the system have been observed \cite{neuhauser_opinion_2021}. Similar results are to be expected in our model and will be the topic of our future work. Here, in order to keep our analytical results more trackable, we will focus on the case of pairwise interactions.  

\subsection{Stochastic influence: Multiplicative noise} \label{sec:MultNoise}
Many of the classical models of opinion dynamics are of a deterministic nature  \cite{degroot_reaching_1974,rainer_hegselmann_opinion_2002,beckmann_interacting_2003}. Historically, the first stochastic versions of these models have always used additive noise, i.e. stochastic noise with constant (in time, status and over the population) strength \cite{wang_noisy_2017,pineda_noisy_2009}. Having a constant noise coefficient also implies that the driving noise of agents $i$ and $j$ are independent for all $i,j \in \set{1,\dots,N}$. This makes the model mathematically easier, but such a modelling choice is questionable from a social sciences perspective. 
The usual role of the noise is to account for external influences, that are not already incorporated into the model through agents interactions, such as randomly occurring environmental changes or some other significant events. However, these events influence all (or at least most) agents, introducing non-trivial correlation between the individual noises. The choice of additive noise also doesn't account for more complex social mechanisms observed in real world systems \cite{starnini_emergence_2016}. Namely, the strength of the noise can additionally depend on how homogeneous the opinions of agent's peers are. In a highly polarised group of individuals, one might be more susceptible to random influences than in a group where everyone has roughly the same opinion.\\
Recently, a few models considered such complex mechanisms by introducing a \textit{multiplicative noise}, e.g. in the context of opinion dynamics \cite{starnini_emergence_2016,Maas2010}, animal movement \cite{preisler2004modeling} and flocking of a Cucker-Smale system \cite{sun2015positive}. 

Extending on the ideas from \cite{starnini_emergence_2016}, one concrete example of a multiplicative noise for our model is defined by 
\begin{align}\label{eq:MultNoise}
    \sigma_{op}^i(X_t, \Theta_t) = \min_{j: \norm{X_t^i - X_t^j} \leq R}\abs{\Theta_t^i - \Theta_t^j} \cdot \text{Id}, i = 1,\ldots,N. 
\end{align}

Thus, the noise in this model is characterized by a similarity bias, i.e. the closer the opinions of agents' peers are to the opinion of that agent, the smaller the random fluctuations. Similarly, agents have a higher probability to move away from agents with very different opinions. These effects have been shown to yield fast formation of stable spatial clusters in which a local consensus is reached  \cite{starnini_emergence_2016}.

\subsection{Numerical simulations of the ABM}\label{sec:Simulation}
In this section, we show the main properties of our proposed model for opinion dynamics with feedback loops given by \eqref{eq:genSystem}, \eqref{eq:OpModel} and \eqref{eq:SpModel}. Using numerical simulations, we study how different parameters influence grouping of agents into social and opinion clusters. Additionally, we demonstrate a difference between additive and multiplicative noise and how this affects the stability of clusters and opinion distribution within clusters.\\

\begin{figure}[htb!]
	\centering
	\begin{subfigure}[b]{0.3\textwidth}
    \centering
	\includegraphics[width=\textwidth]{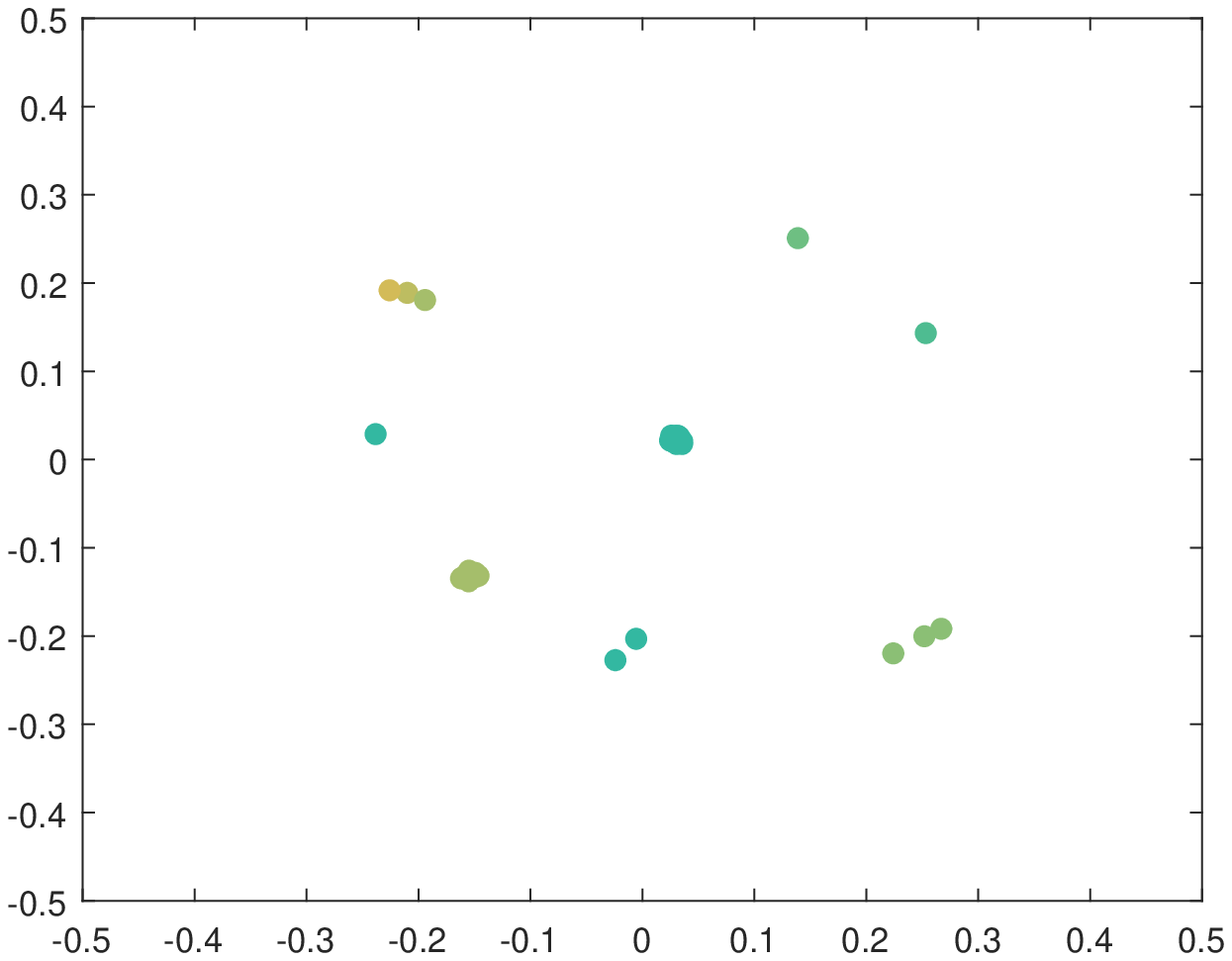}
	  \caption{$\sigma = 0.01$}
      \label{subfig:AddNoise01}
     \end{subfigure}     
     \hfill
     \begin{subfigure}[b]{0.3\textwidth}
    \centering
	\includegraphics[width=\textwidth]{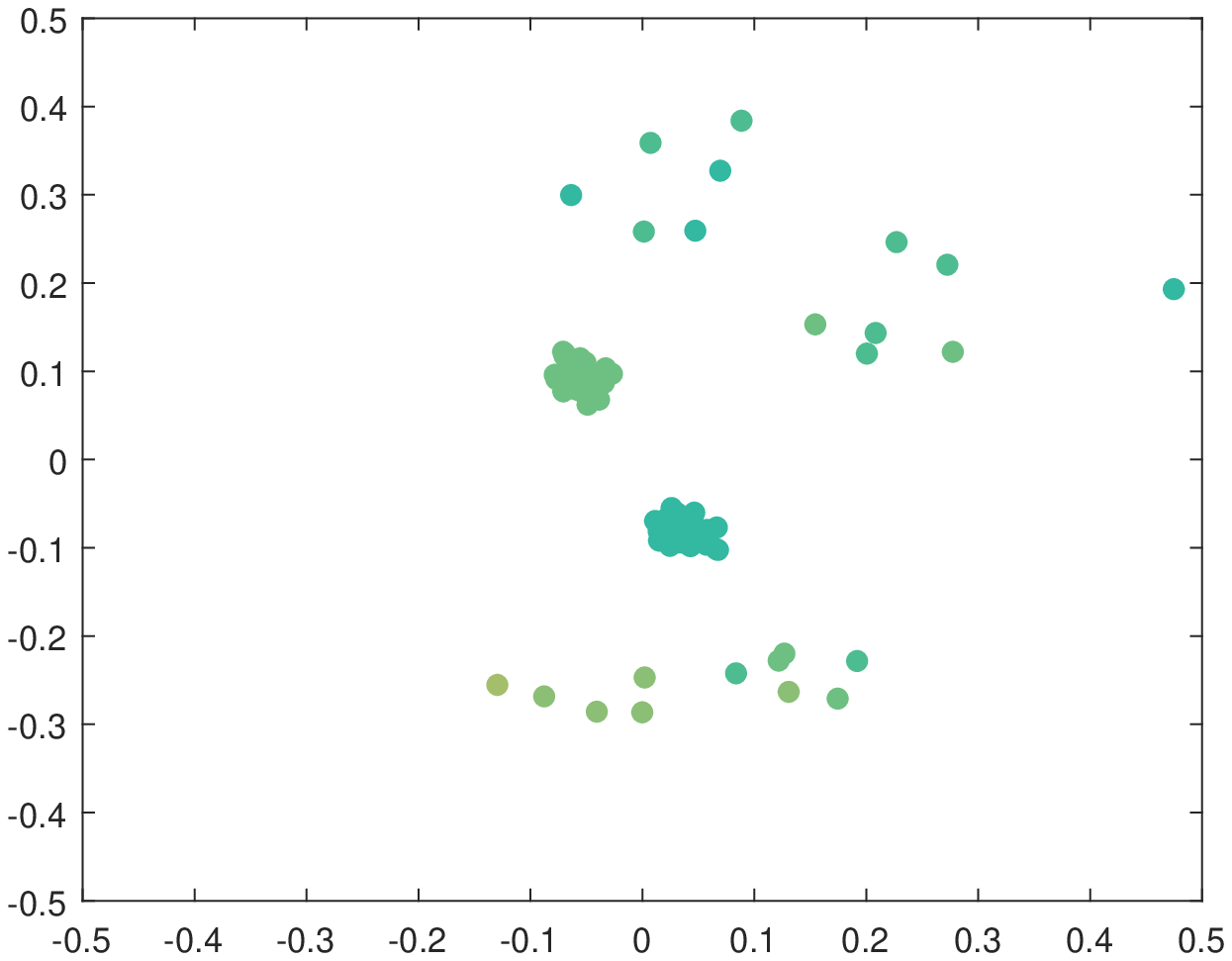}
		         \caption{$\sigma = 0.05$}
        \label{subfig:AddNoise05}
     \end{subfigure}          
     \hfill
	\begin{subfigure}[b]{0.3\textwidth}
    \centering
	\includegraphics[width=\textwidth]{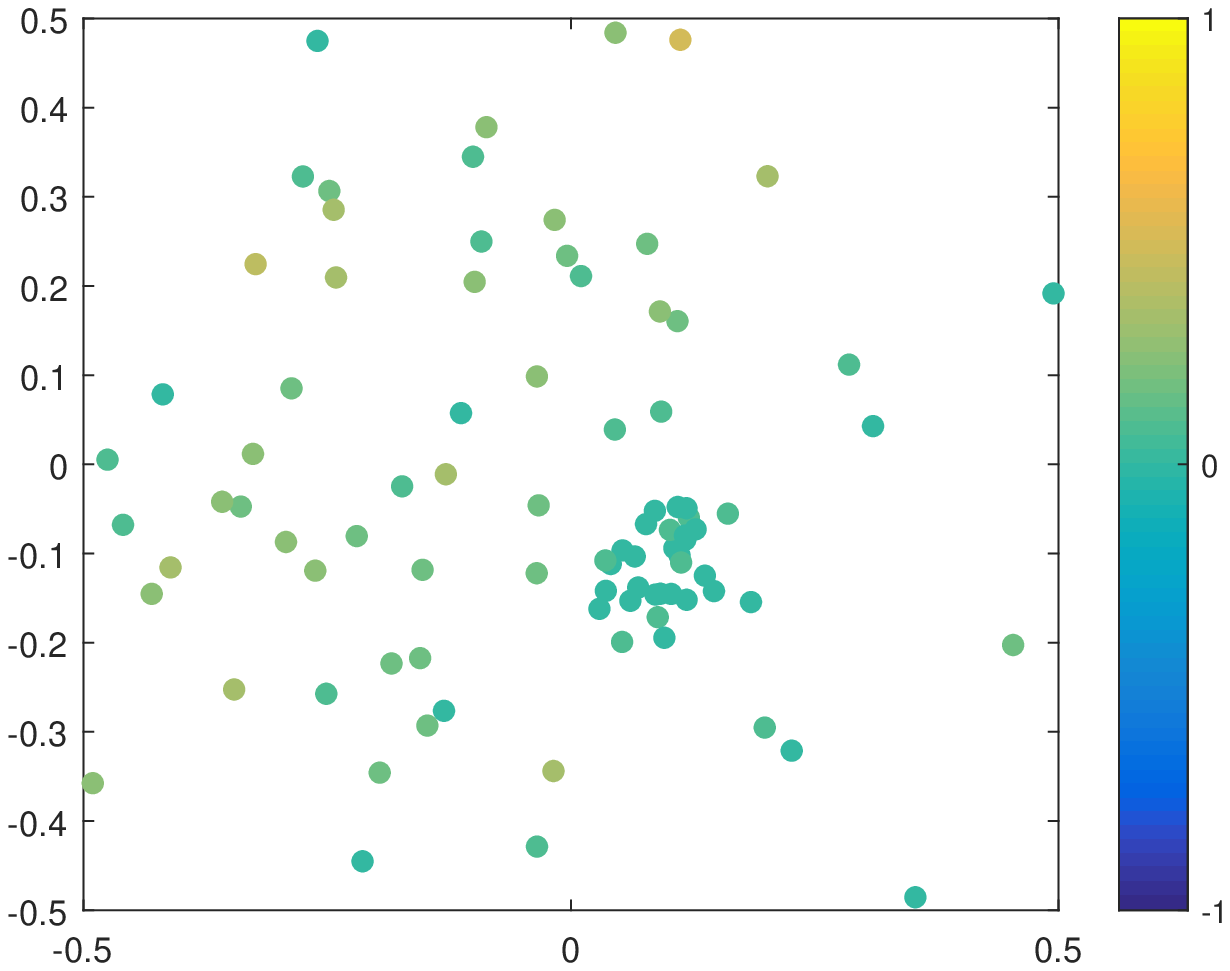}		         \caption{$\sigma = 0.15$} \label{subfig:AddNoise1}
	\end{subfigure}\\
	\caption{Snapshots from numerical simulations at final time $T=2.5$ for different influences of additive noise: (a) $\sigma = 0.01$, (b) $\sigma = 0.05$ and (c) $\sigma = 0.15$. Position of agents indicated their position in a social space. Color of agents denotes their opinions according to the color-bar. Other parameters are fixed to $R=0.015$ and $\alpha = \beta = 20$.}
	\label{fig:additive_example}
\end{figure}

\begin{figure}[htb!]
	\centering
	\begin{subfigure}[b]{0.3\textwidth}
    \centering
	\includegraphics[width=\textwidth]{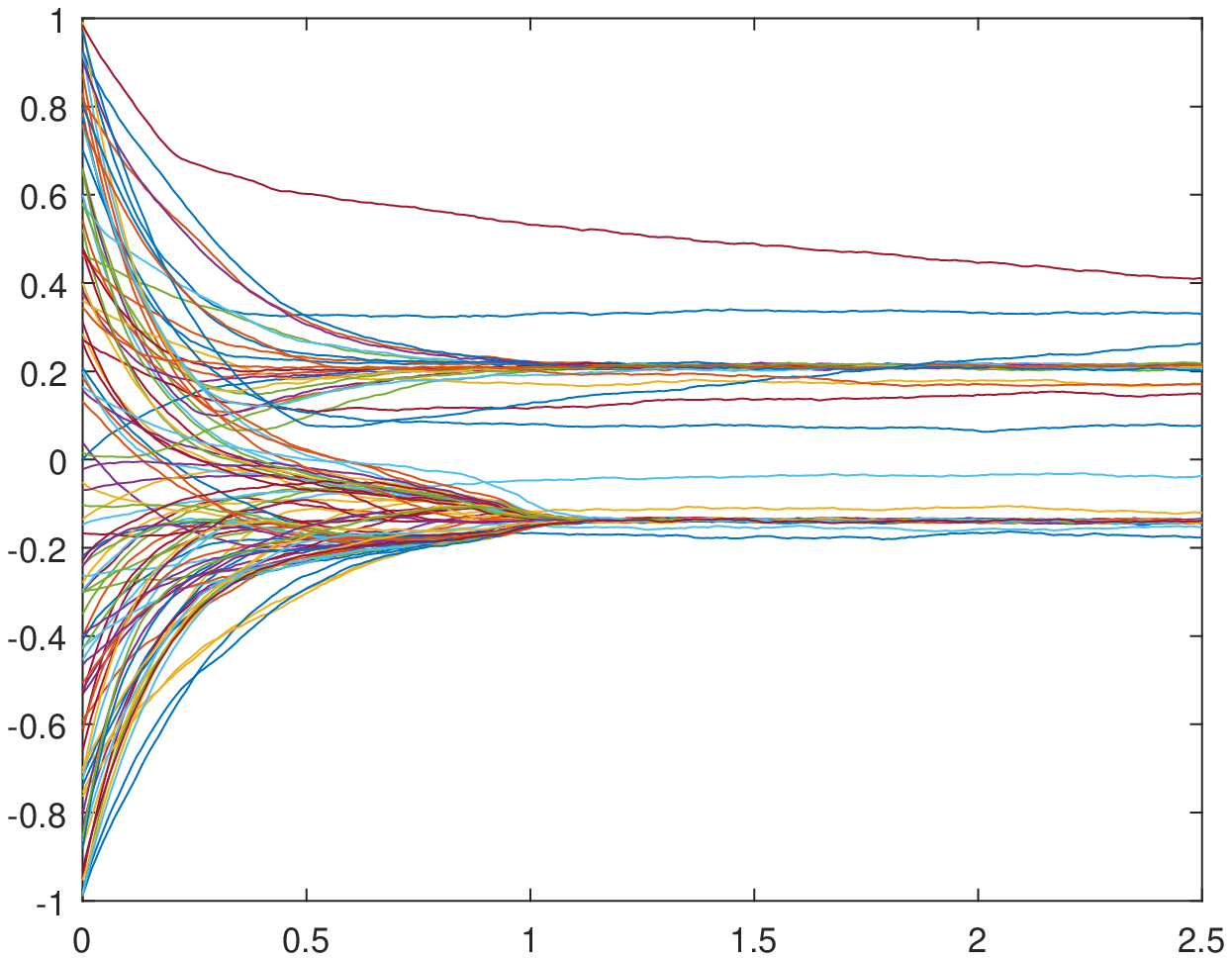}
	  \caption{$\sigma = 0.01$}
      \label{subfig:TrAddNoise01}
     \end{subfigure}     
     \hfill
     \begin{subfigure}[b]{0.3\textwidth}
    \centering
	\includegraphics[width=\textwidth]{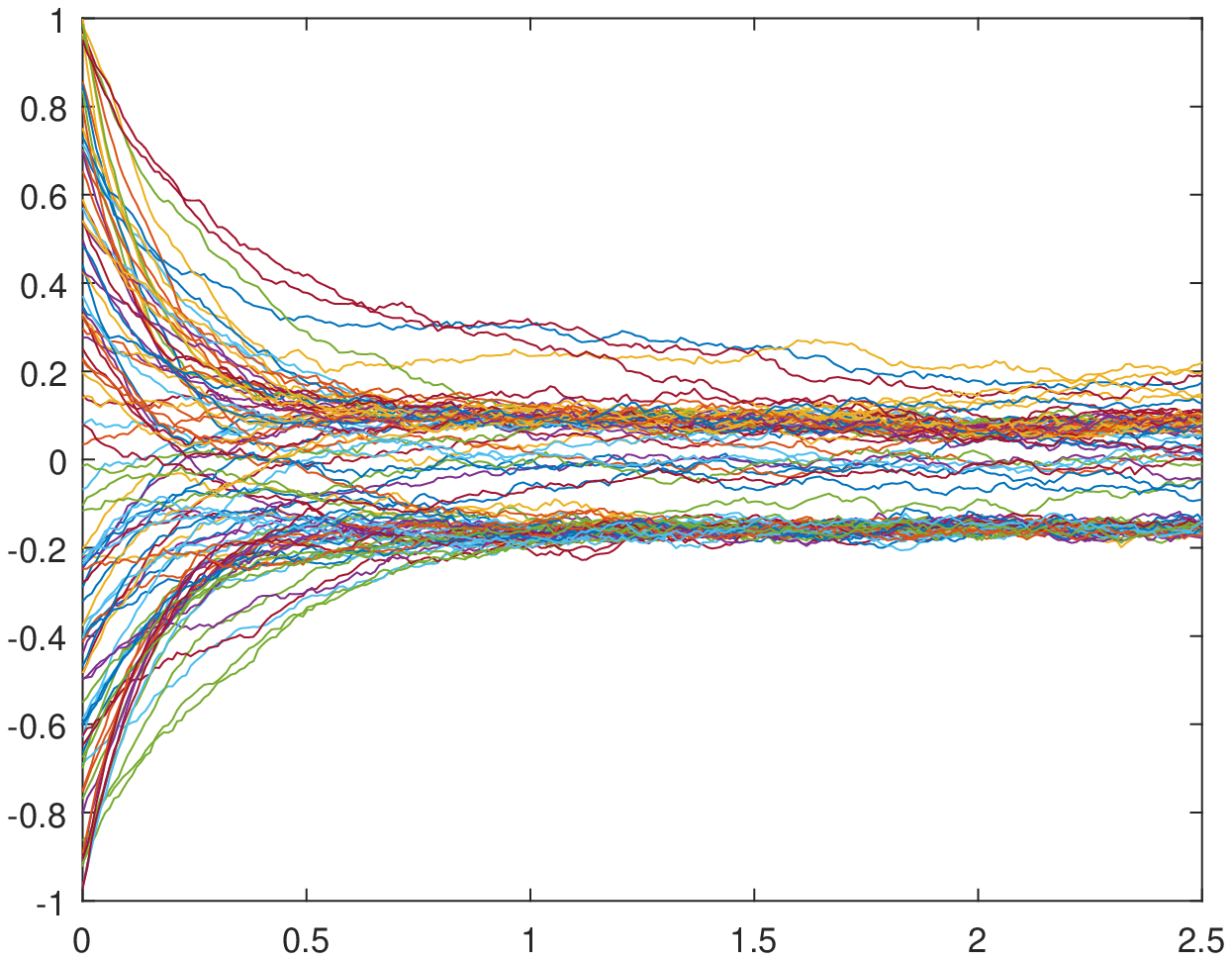}
		         \caption{$\sigma = 0.05$}
        \label{subfig:TrAddNoise05}
     \end{subfigure}          
     \hfill
	\begin{subfigure}[b]{0.3\textwidth}
    \centering
	\includegraphics[width=\textwidth]{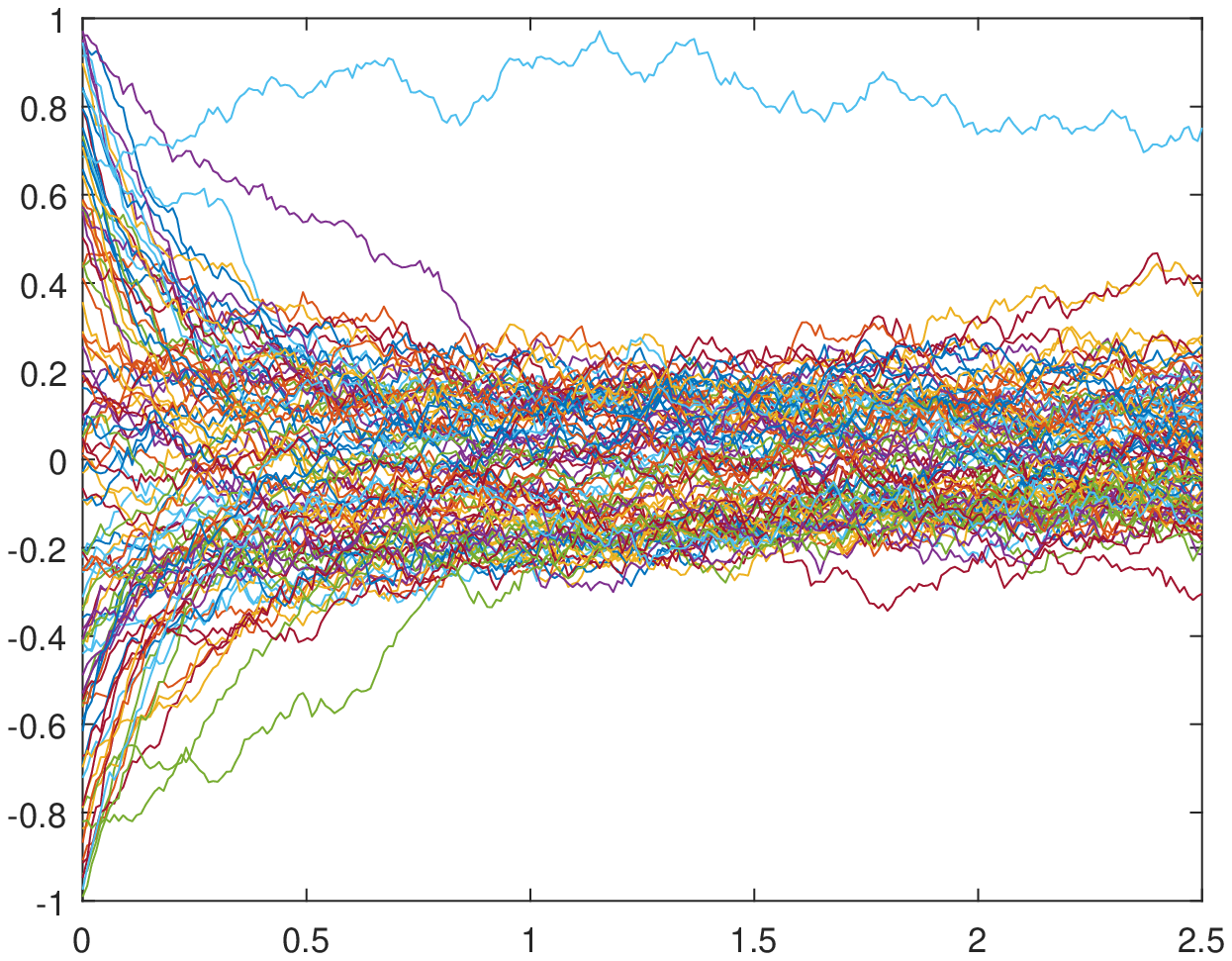}		         \caption{$\sigma = 0.15$} \label{subfig:TrAddNoise1}
	\end{subfigure}
	\caption{Opinion trajectories of agents' over time-period $[0,2.5]$ for different influences of additive noise: (a) $\sigma = 0.01$, (b) $\sigma = 0.05$ and (c) $\sigma = 0.15$. Other parameters are fixed to $R=0.015$ and $\alpha = \beta = 20$.}
	\label{fig:TrajectoriesAdditive_example}
\end{figure}

\begin{figure}[htb!]
	\centering
	\begin{subfigure}[b]{0.3\textwidth}
    \centering
	\includegraphics[width=\textwidth]{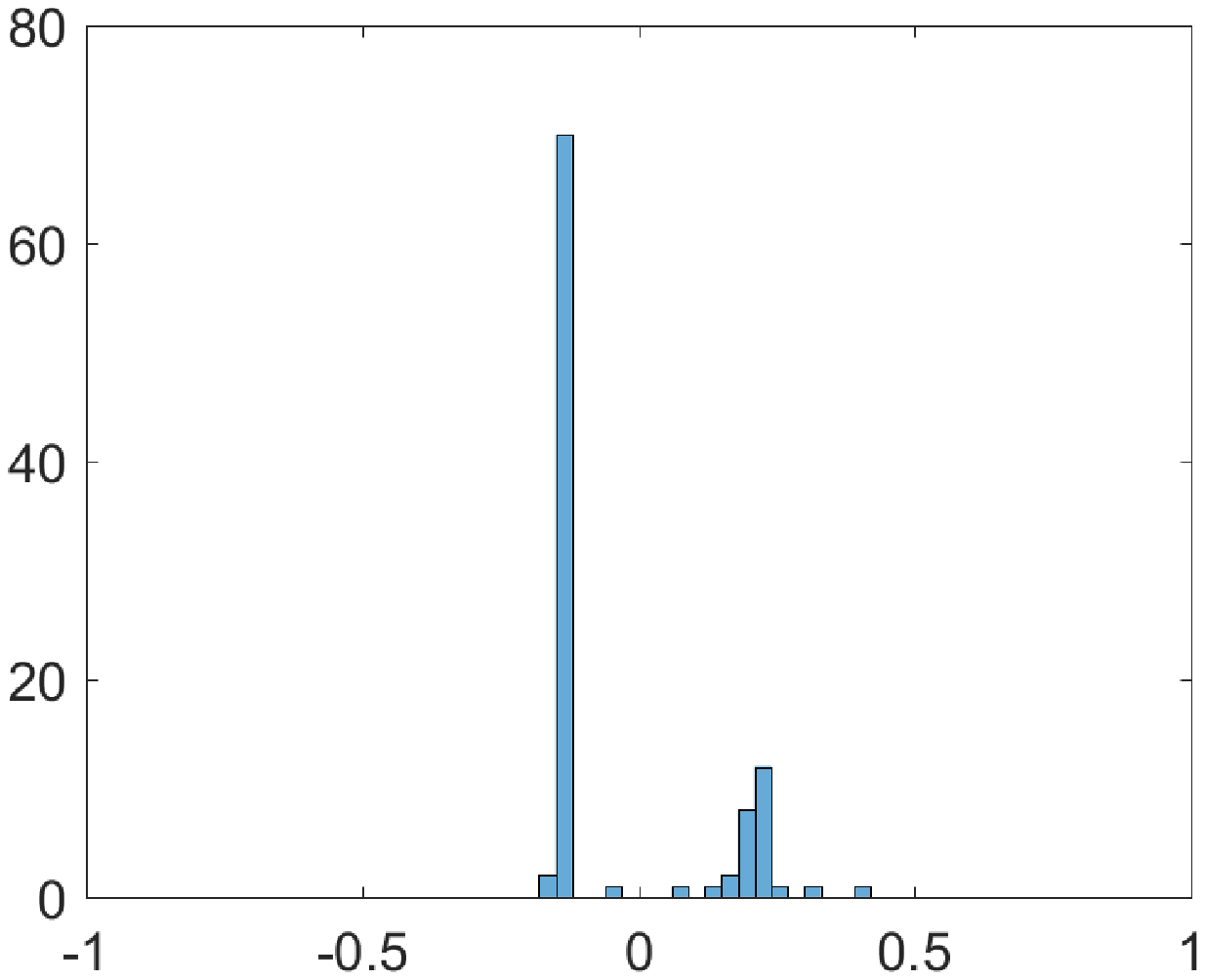}
	  \caption{$\sigma = 0.01$}
      \label{subfig:DAddNoise01}
     \end{subfigure}     
     \hfill
     \begin{subfigure}[b]{0.3\textwidth}
    \centering
	\includegraphics[width=\textwidth]{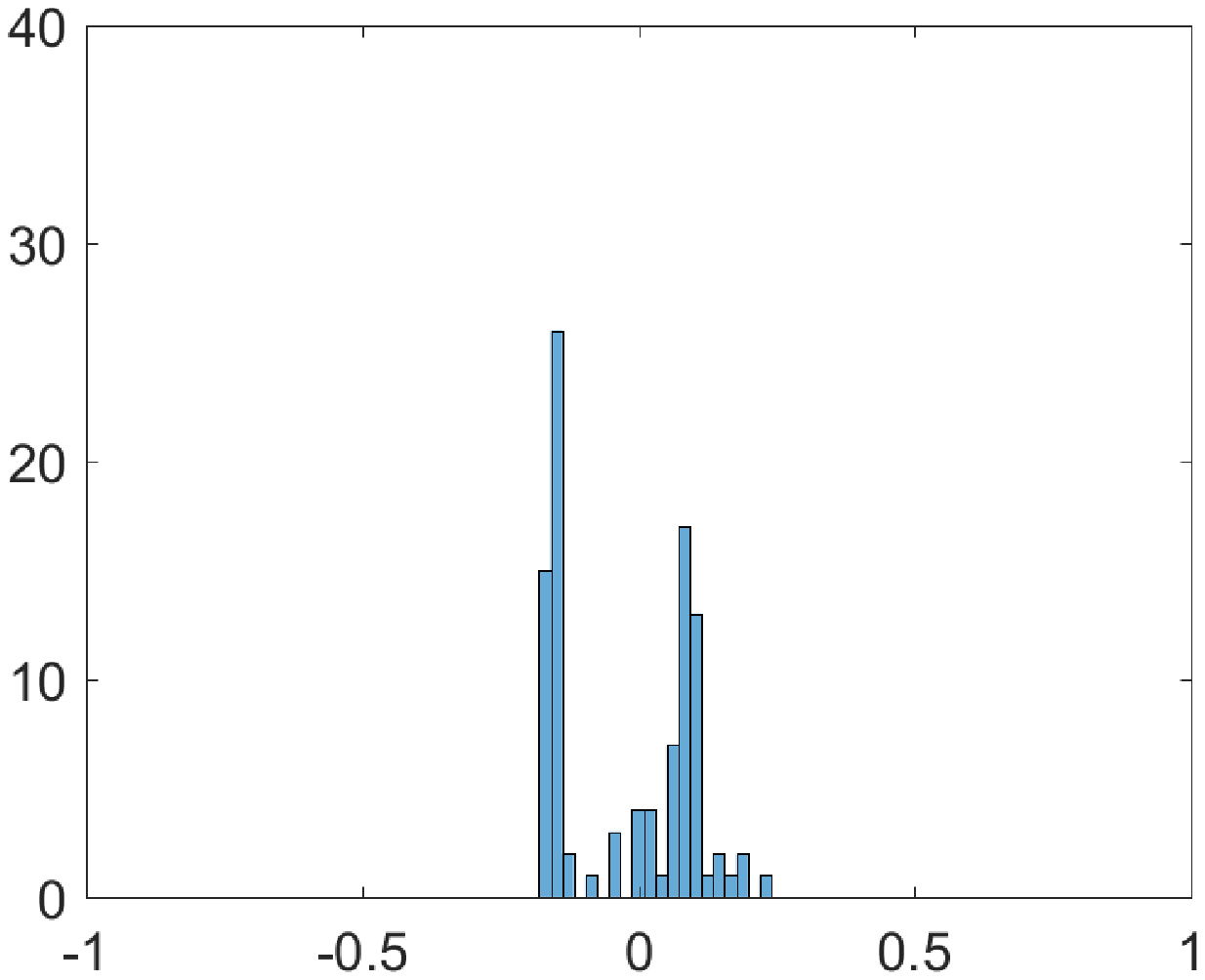}
		         \caption{$\sigma = 0.05$}
        \label{subfig:DAddNoise05}
     \end{subfigure}          
     \hfill
	\begin{subfigure}[b]{0.3\textwidth}
    \centering
	\includegraphics[width=\textwidth]{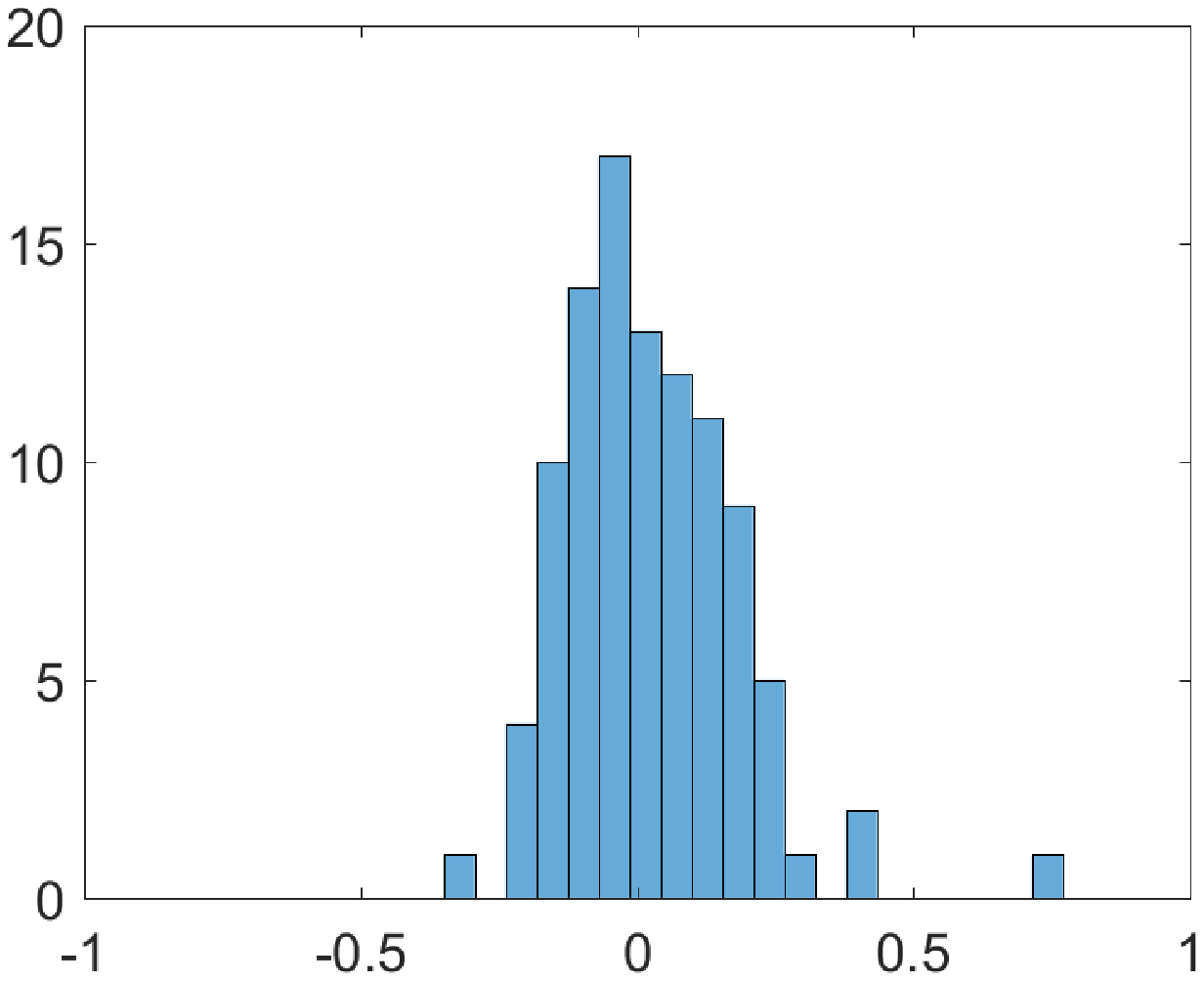}		         \caption{$\sigma = 0.15$} \label{subfig:DAddNoise1}
	\end{subfigure}
	\caption{Distribution of agents' opinions at final time $T=2.5$ for different influences of additive noise: (a) $\sigma = 0.01$, (b) $\sigma = 0.05$ and (c) $\sigma = 0.15$. Other parameters are fixed to $R=0.15$ and $\alpha = \beta = 20$.}
	\label{fig:DensityAdditive_example}
\end{figure}
We run stochastic simulations for $N = 100$ agents, where at $t=0$ all agents are placed uniformly at random inside $[-0.25,0.25]^2$ and their initial opinions are distributed uniformly in the interval $[-1,1]$. We run each simulation for $250$ time-steps, i.e. until $T = 2.5$ where $\Delta t = 0.01$. In every time-step, movement of agents and their opinions are obtained using a Euler–Maruyama scheme \cite{kloeden1992higher,higham2001algorithmic}.\\ 
First, we consider the case of additive noise, where we assume that $\sigma:=\sigma_{op} = \sigma_{sp}$ and we distinguish between different noise strengths, namely $\sigma = 0.01$, $\sigma = 0.05$ and $\sigma = 0.15$. We fix the interaction radius $R=0.15$ and include strong opinion influence $\alpha = 20$ and strong spatial influence $\beta =20$. In Figure \ref{fig:additive_example} we plot simulation snapshots at the final time $T = 2.5$ for these different values of $\sigma$, where agents' positions correspond to their position in a social space and agents' colours indicate their opinion. We observe a strong influence of noise on the cluster formation process. Namely, the higher the $\sigma$, the denser the inter-cluster connections are, such that for $\sigma = 0.15$ there is no clear separation into different clusters neither in social nor in opinion space. This can be seen in Figure \ref{fig:TrajectoriesAdditive_example} of opinion trajectories of individual agents. Additionally, stronger noise in the system leads to more diversified opinion distributions when reaching consensus, as can be observed in the final distribution of agents' opinions in Figure \ref{fig:DensityAdditive_example}. For small values of noise, i.e. $\sigma = 0.01$, the system reaches a stable state and we see several spatially separated clusters of agents that are stable and within each cluster agents have similar opinions. This behavior has been observed in previous studies of network models \cite{Yu2017,Royal2021}.\\
\begin{figure}[htb!]
	\centering
	\begin{subfigure}[b]{0.3\textwidth}
    \centering
	\includegraphics[width=\textwidth]{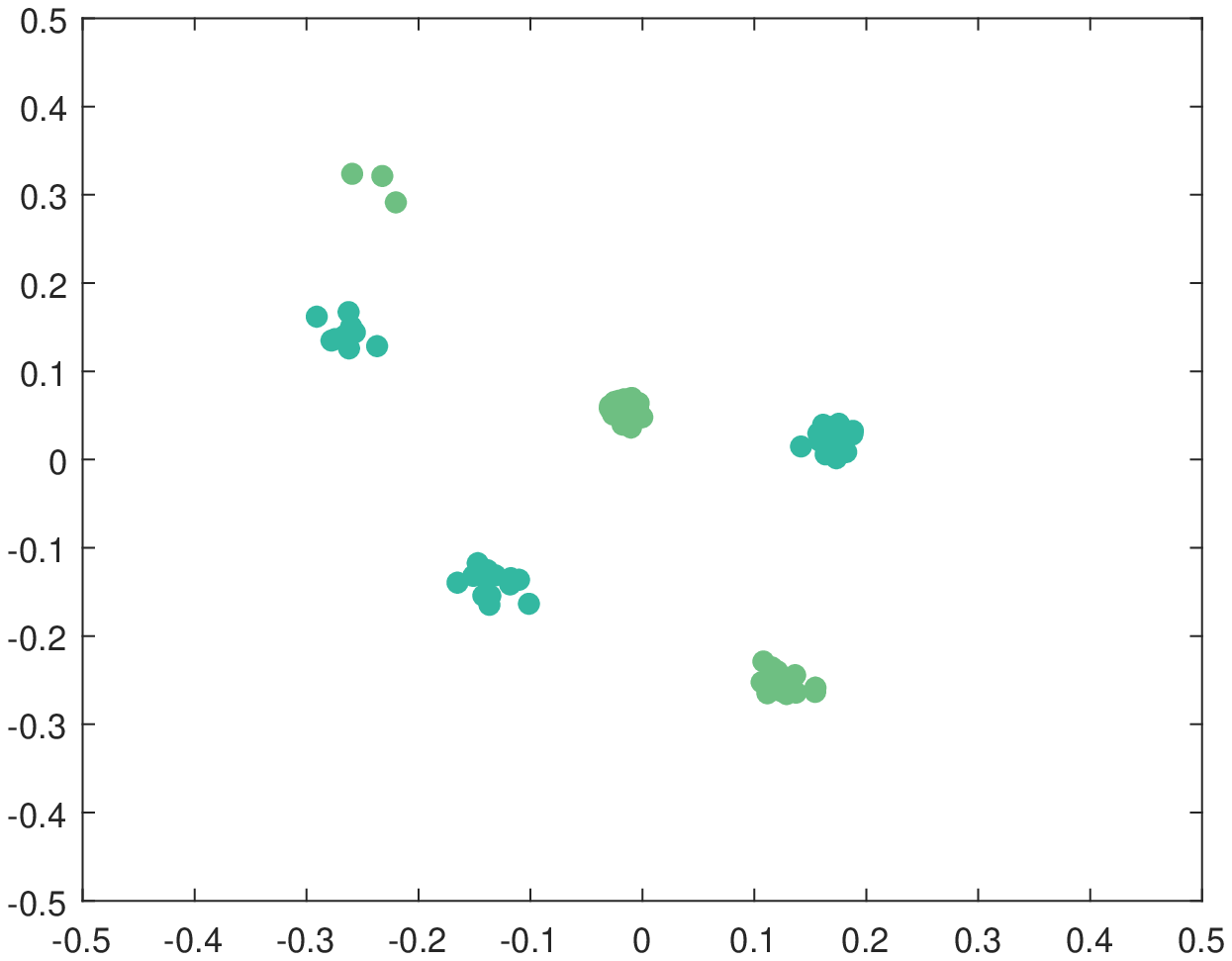}
	  \caption{$\alpha = \beta = 50$}
      \label{subfig:Alpha50AddNoise01}
     \end{subfigure}     
     \hfill
     \begin{subfigure}[b]{0.3\textwidth}
    \centering
	\includegraphics[width=\textwidth]{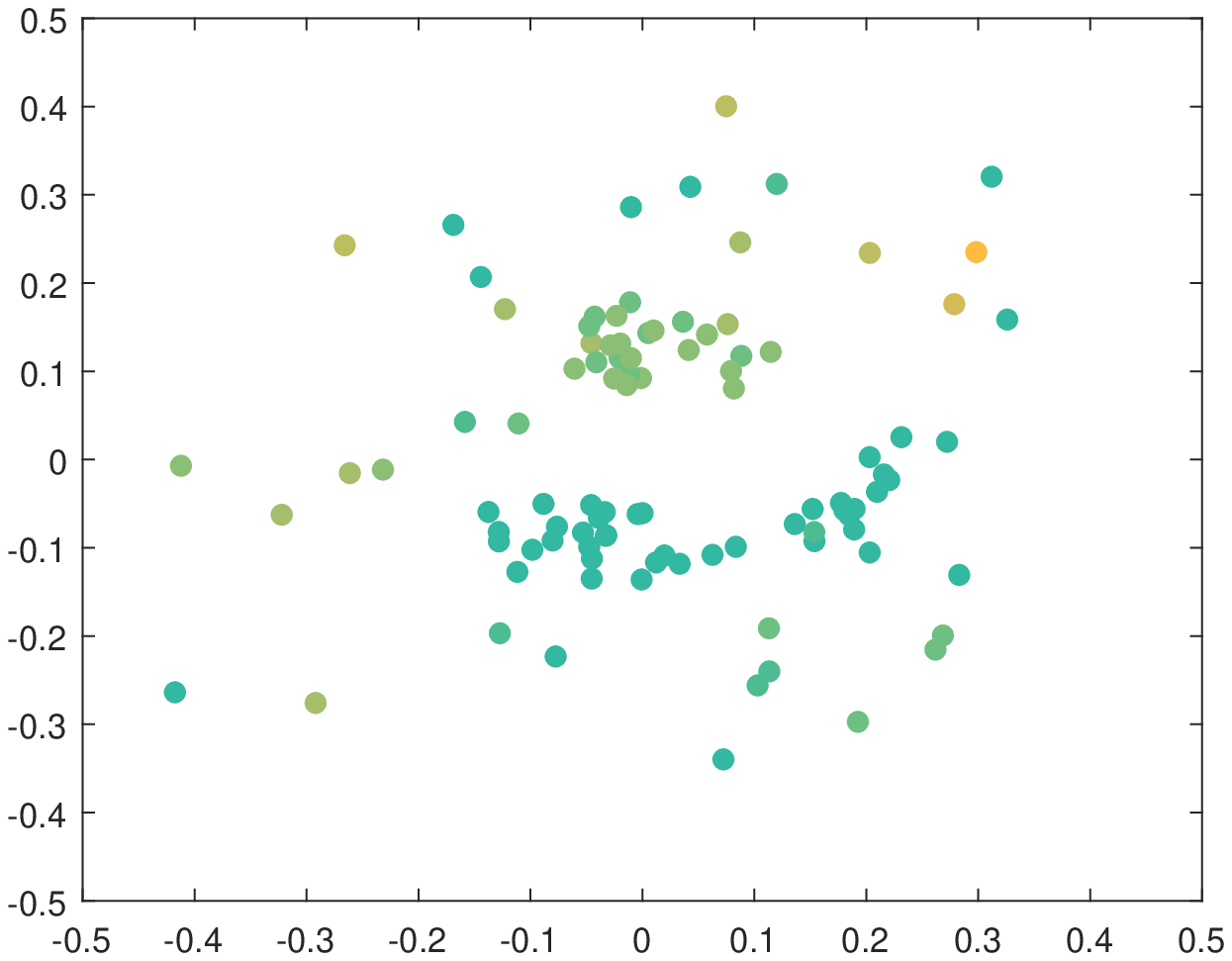}
	\caption{$\alpha = \beta = 5$}
    \label{subfig:Alpha5Beta5AddNoise05}
     \end{subfigure}
          \hfill
    \begin{subfigure}[b]{0.3\textwidth}
    \centering
	\includegraphics[width=\textwidth]{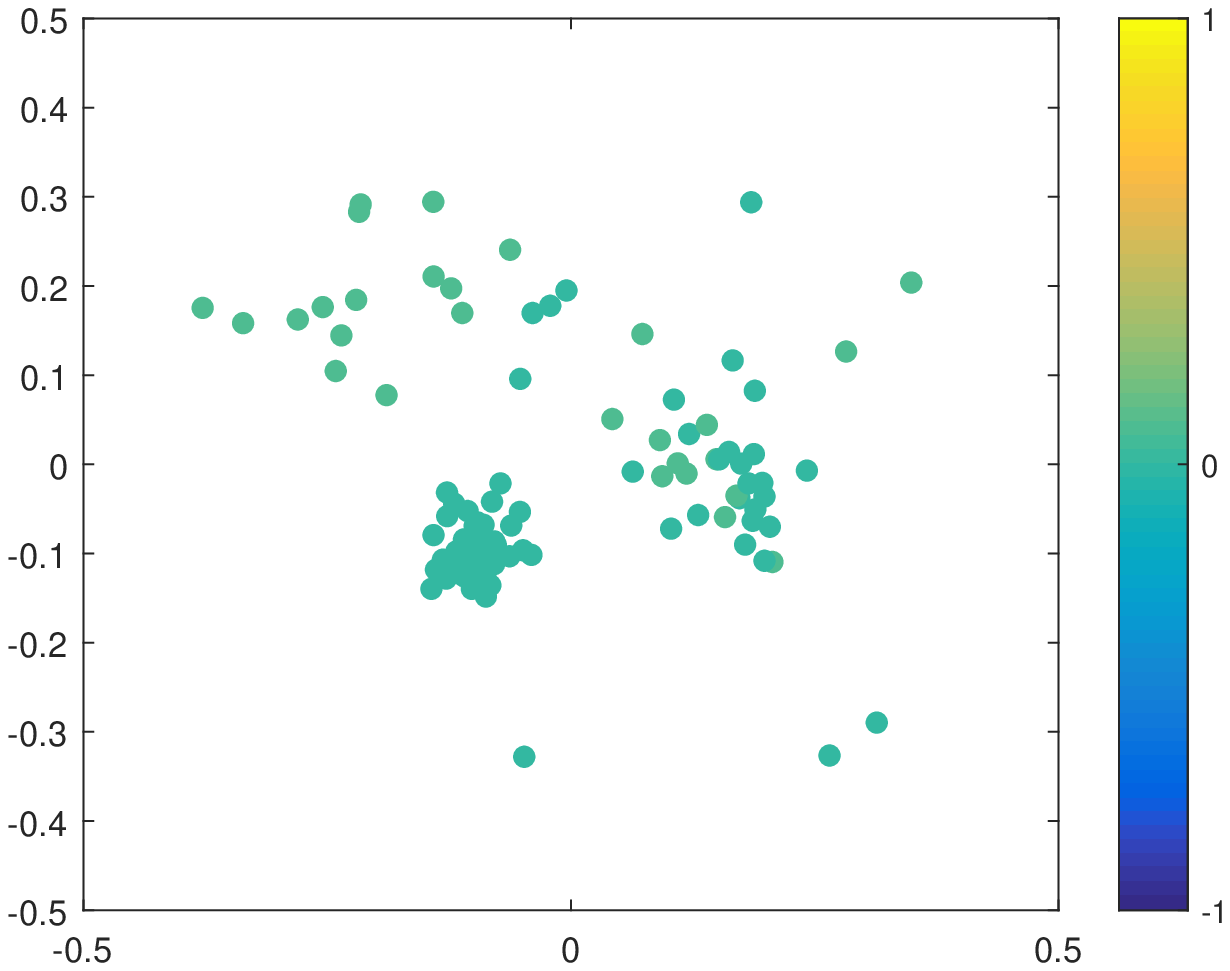}
	\caption{$\alpha = 50, \beta = 5$}
    \label{subfig:Alpha50AddNoise05}
     \end{subfigure}
	\caption{Snapshots from numerical simulations at final time $T=2.5$ for different influences of opinion and spatial strength: (a) $\alpha = \beta = 50$, (b) $\alpha = \beta = 5$, (c) $\alpha = 50, \beta = 5$. Position of agents indicated their position in a social space. Color of agents denotes their opinions according to the color-bar. Other parameters are fixed to $R=0.15$ and $\sigma = 0.05$.}
	\label{fig:OtherAlphaAdditive_example}
\end{figure}

\begin{figure}[htb!]
	\centering
	\begin{subfigure}[b]{0.3\textwidth}
    \centering
	\includegraphics[width=\textwidth]{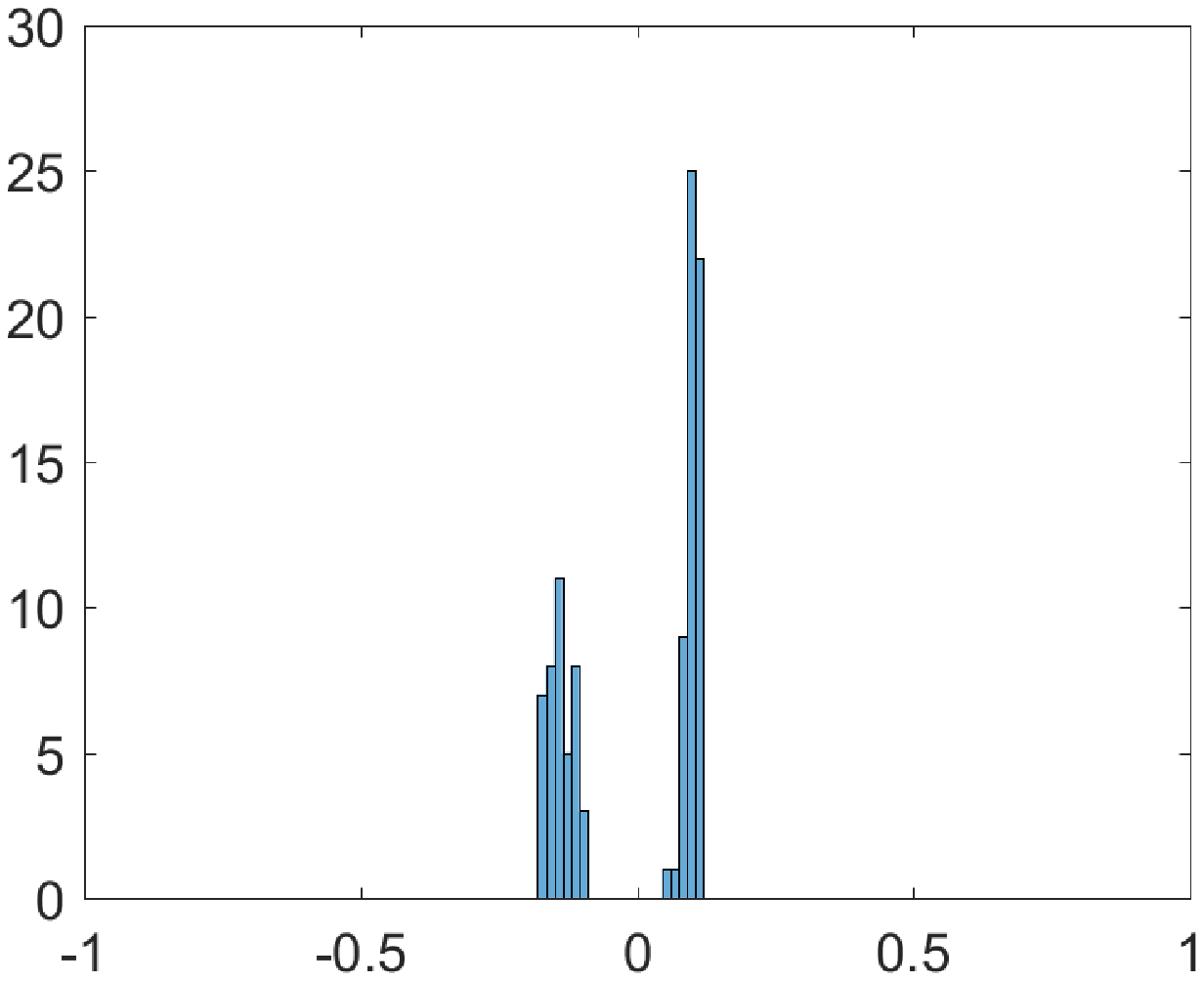}
	  \caption{$\alpha = 50$}
     \label{subfig:DAlpha50AddNoise01}
     \end{subfigure}     
     \hfill
     \begin{subfigure}[b]{0.3\textwidth}
    \centering
	\includegraphics[width=\textwidth]{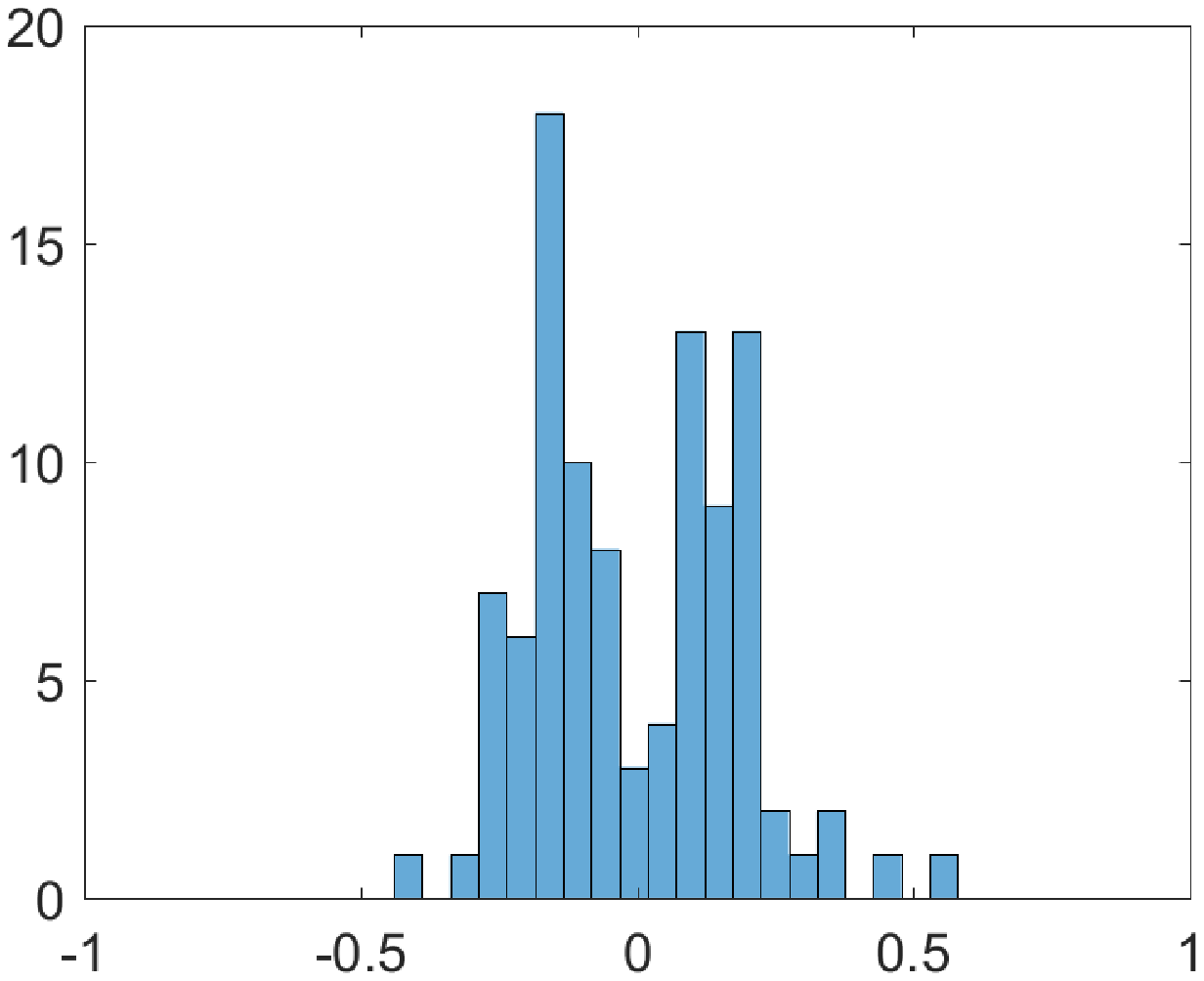}
	\caption{$\alpha = 5$}
    \label{subfig:DAlpha5AddNoise05}
     \end{subfigure}
    \hfill
     \begin{subfigure}[b]{0.3\textwidth}
    \centering
	\includegraphics[width=\textwidth]{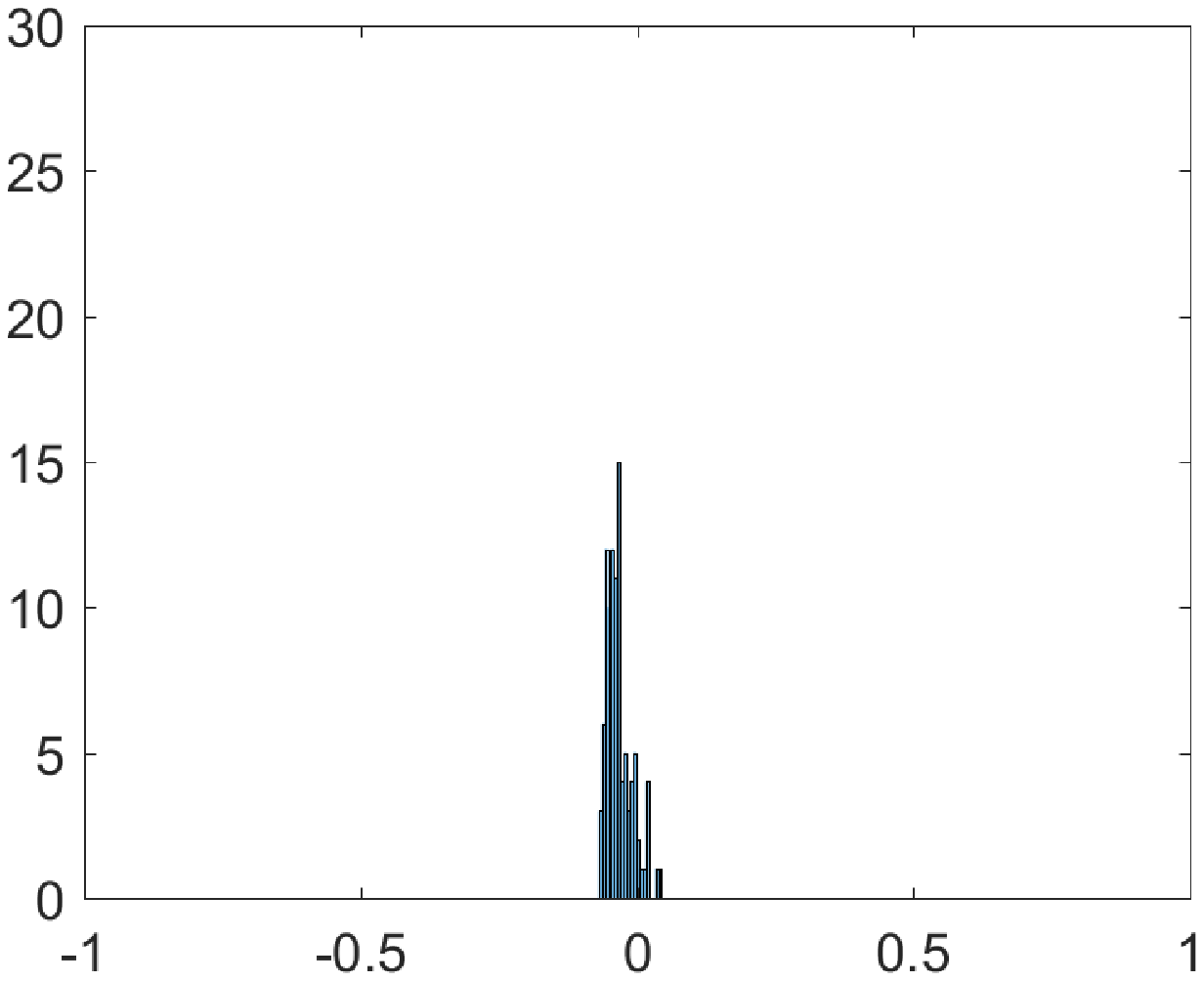}
	\caption{$\alpha = 50, \beta = 5$}
    \label{subfig:DAlpha50Beta5AddNoise05}
     \end{subfigure}\\
	\caption{Distribution of agents' opinions at final time $T=2.5$ for different influences of opinion and spatial strength: (a) $\alpha = \beta = 50$, (b) $\alpha = \beta = 5$. Position of agents indicated their position in a social space. Color of agents denotes their opinions according to the color-bar. Other parameters are fixed to $R=0.15$ and $\sigma = 0.05$.}
	\label{fig:DensityOtherAlphaAdditive_example}
\end{figure}
Next, we study the influence of opinion strength $\alpha$ and spatial strength $\beta$ on the cluster formation process. Additional to results shown in Figure\ref{subfig:AddNoise05} for $\alpha = \beta =20$, in Figure \ref{fig:OtherAlphaAdditive_example} we show the snapshots after $250$ time-steps for $\alpha = \beta= 50$, $\alpha = \beta = 5$ and $\alpha = 50, \beta = 5$. Large values of $\alpha, \beta$ mean that agents are strongly influenced by their peers, such that they compromise heavily towards their neighbours when updating their opinions and positions. This leads to faster formation of stable clusters where within the clusters agents have similar opinions, see Figure \ref{fig:OtherAlphaAdditive_example} and Figure \ref{fig:DensityOtherAlphaAdditive_example}. Because of this effect, in standard bounded confidence models, these parameters are often called ‘convergence parameters’ as they affect the speed of convergence \cite{PorterAdaptive2021}. Comparing the three scenarios in this experiment and in particular the cases when $\alpha=\beta$ to $\alpha\neq\beta$, we see that the spatial strength is important for formation of spatial clusters, but also in the case when spatial strength is small $\beta = 5$ and opinion strength is large $\alpha =50$, because of the feedback loop agents tend to form large, loose groups. This effect emphasises the importance of the feedback loop in this system.
\begin{figure}[htb!]
	\centering
	\includegraphics[width=0.3\textwidth]{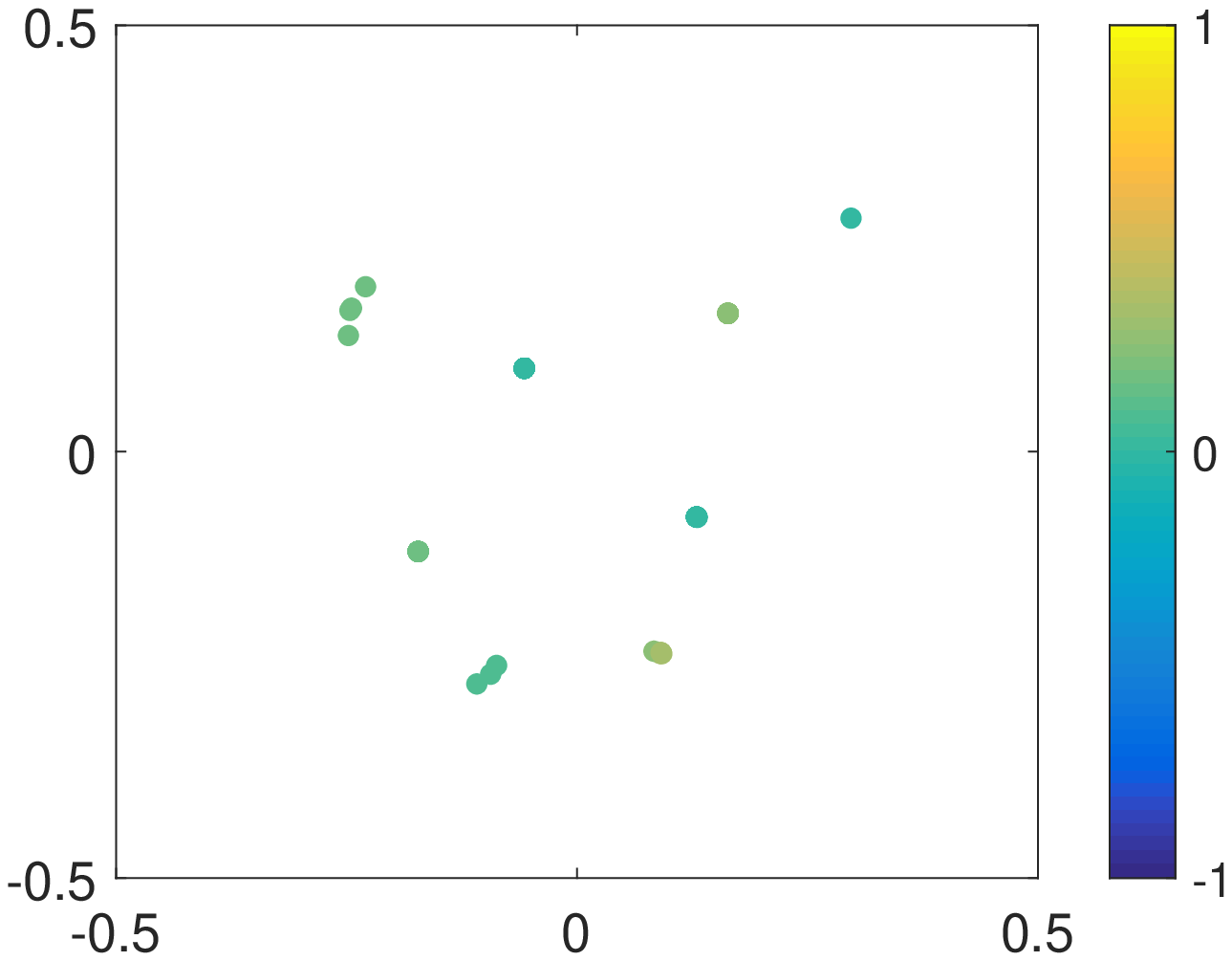}
	\includegraphics[width=0.3\textwidth]{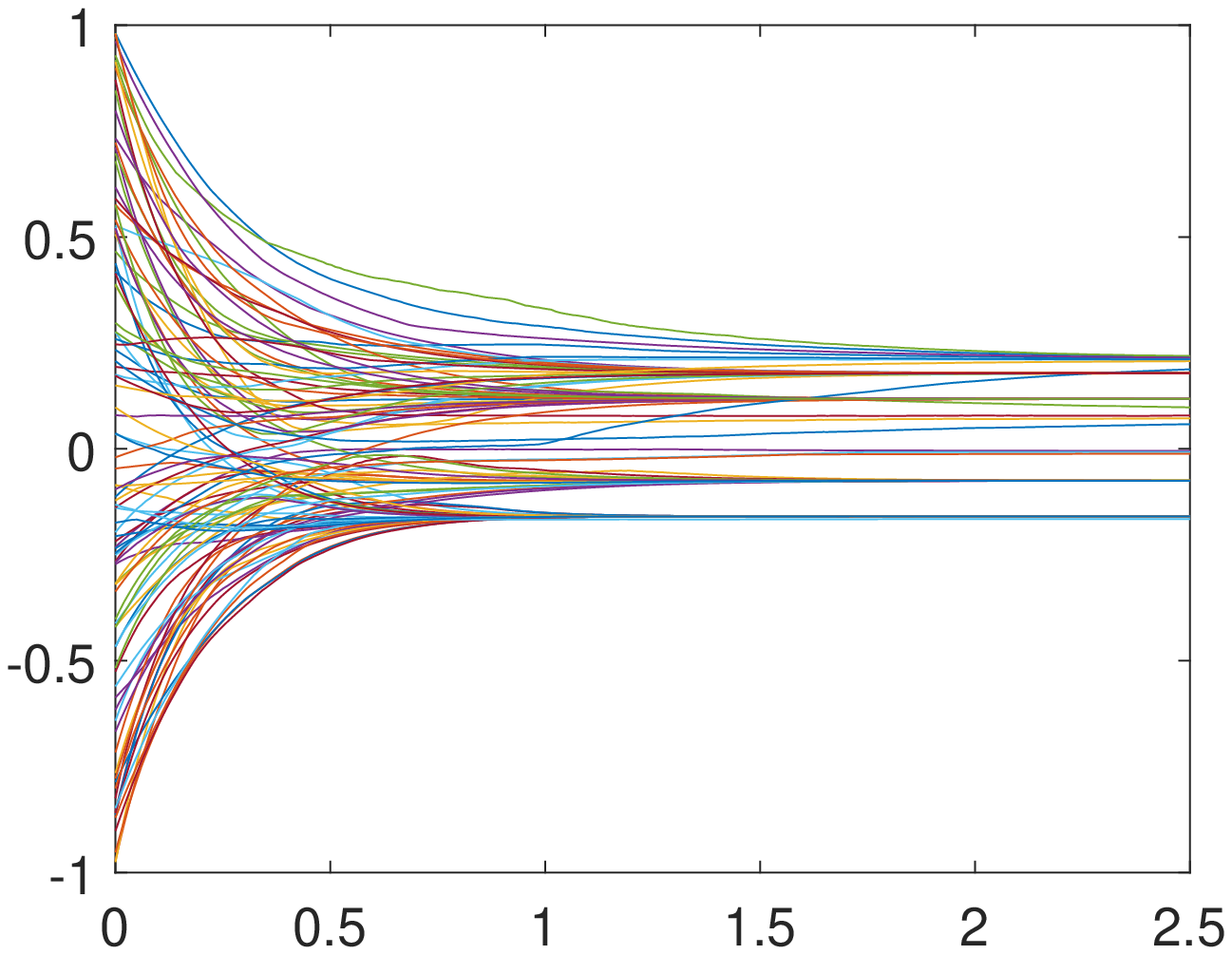}
	\includegraphics[width=0.3\textwidth]{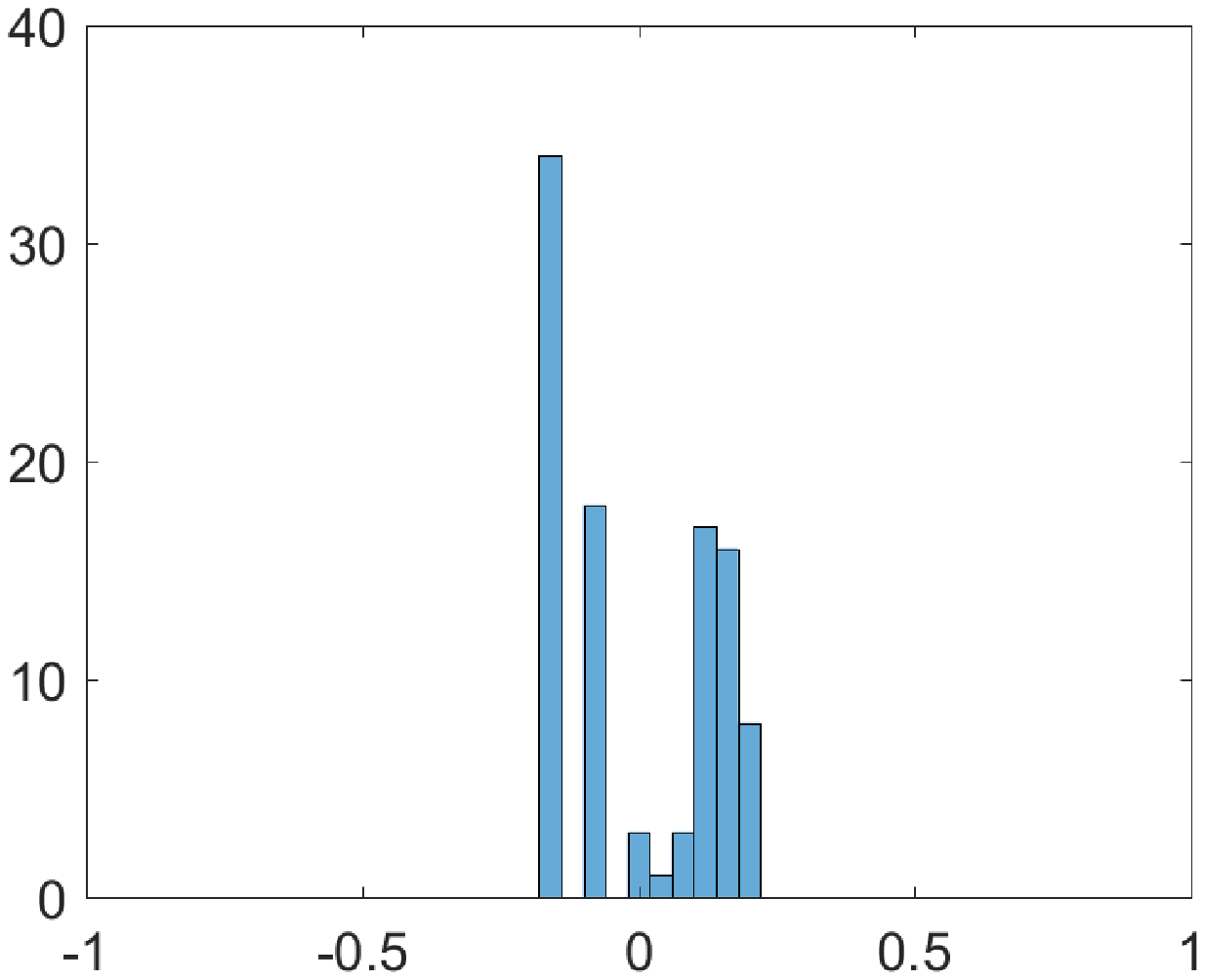}
	\caption{Results of one simulation of the ABM with multiplicative noise, for $\alpha = \beta = 20$ and $R = 0.15$. Left: Snapshot of the dynamics at $T=2.5$. Middle: Opinion trajectories during $[0,2.5]$. Right: Distribution of agents' opinions at $T=2.5$. Position of agents indicated their position in a social space. Color of agents denotes their opinions according to the color-bar. Other parameters are fixed to $R=0.015$ and $\sigma = 0.05$.}
	\label{fig:MultMultNoiseExample}
\end{figure}

Finally, we examine the effects of multiplicative noise on the dynamics of our ABM given by \eqref{eq:genSystem}, \eqref{eq:OpModel} and \eqref{eq:SpModel}. To this end, we consider multiplicative noise $\sigma_{op} = \sigma_{sp}$, as introduced in \eqref{eq:MultNoise}. In Figure \ref{fig:MultMultNoiseExample} we see a fast formation of $8$ spatially well separated clusters. In each of these clusters local consensus is reached, as discussed in Section \ref{sec:MultNoise}. These clusters are stable in their spatial position and opinion distribution. In comparison to the case of additive noise, see Figure  \ref{fig:additive_example}, clusters obtained through the influence of multiplicative noise are constructed faster and are more stable.

\section{Theoretical analysis: Coupled mean-field limit}\label{sec:MeanField}

In this section we consider the theoretical setting that describes the feedback loop dynamics of the system and its limit when $N \to \infty$. In particular, we start by briefly motivating the mean-field equation, and then we state its well-posedness and   the convergence of ABM to this mean-field equation.

From the theoretical point of view, one of the main challenges of these results lies in the regularity assumptions of drift and diffusion coefficients. The standard result is considering Lipschitz coefficients and additive noise, as presented in \cite{sznitman_topics_1991}. On the one hand, one can consider the dynamics just for one agent with singular interactions and additive noise, such as in \cite{krylov2005strong} and its extensions to multiplicative noise case as in \cite{zhang2016stochastic}. However, as noted in \cite{hao2022strong}, these results can not be trivially extended for $N$ particle systems. Thus, different techniques need to be used to obtain results for the singular interactions, such as mixed $L^p$ drifts. The literature on the weaker assumptions on drift and diffusion than the standard global Lipschitz assumption is tremendous as it is a very popular topic, see \cite{jabin2016mean, dos2022simulation,hammersley2021mckean, bresch2020mean, lacker2018strong}.

However, in order to illustrate the main message of this article that is the consideration of the feedback loop dynamics with the multiplicative noise, it is enough to consider the standard Lipschitz setting. Since the case of the multiplicative noise in this setting can not be found in an easily accessible form, for the completeness of results, we state in the appendix the proofs of the well-posedness and convergence results, without relying on the sophisticated analysis results.  This result can be seen as an extension of the standard result in \cite{sznitman_topics_1991}.

In order to simplify the notation, we will write $\int $ for $ \int_{\R^d \times \R} = \int_{\R^{d+1}} $ and we define $\mathcal{C} := C([0,T]; \R^{d+1})$. Hence, for the rest of the article, we will assume that   potentials $U$ and $V$ are real valued Lipschitz functions on $\R^{2d} \times \R^2$. More precisely, we will always assume  the following  on the regularity of initial data:

\begin{assumption}\label{IDassumption}
Let  $(B^{sp}, B^{op})$ be a Brownian motion in $\R^d\times \R$ with respect to a filtration $\calF = (\calF_t)_{t \geq 0}$, the initial conditions $X_0$ and $\Theta_0$ are $\calF_0$-measurable and square-integrable, and the maps 
\begin{align}
    &U:\R^d\times \R^d \times \R \times \R \to \R^d, 
    \quad 
    V:\R^d\times \R^d \times \R \times \R \to \R, 
    \\\
    &\sigma_{sp}: \R^d\times \R^d \times \R \times \R \to \R^{d \times d}, 
    \quad 
    \sigma_{op}: \R^d\times \R^d \times \R \times \R \to \R
\end{align}
are bounded and Lipschitz continuous. 
\end{assumption}

\subsection{Motivation for the limiting equations}

In this section we will present a general motivation on how one can derive the so-called mean-field limit of coupled SDEs on $[0,T]$ for a fixed number $N\in \N$ of interacting agents which is a special case of our ABM \eqref{eq:genSystem}. The mean-field limit is the coupled system of SDEs that describes the averaged dynamics of the system when the number of agents tends to infinity.
 
 In particular, from now on the coupled SDE system that we will analyse is given by 
\begin{align}\label{eq:agents}
     \begin{cases}
    dX^{k,N}_t 
    &= 
    \frac{1}{N}\sum_{j=1}^N U(X^k_t, X^j_t, \Theta^k_t, \Theta^j_t)  dt 
    + 
    \frac{1}{N}\sum_{j=1}^N \sigma_{sp}(X^k_t, X^j_t, \Theta^k_t, \Theta^j_t) dB^{sp,k}_t, 
    \\\
    d\Theta^{k,N}_t 
    &=
    \frac{1}{N}\sum_{j=1}^N V(X^k_t, X^j_t, \Theta^k_t, \Theta^j_t) dt + \frac{1}{N}\sum_{j=1}^N\sigma_{op}(X^k_t, X^j_t, \Theta^k_t, \Theta^j_t) dB^{op,k}_t,
    \end{cases}
\end{align}
for $k=1,...,N$, an independent family $(B^{sp,k}, B^{op,k})_{k \in \N}$ of $(d+1)$-dimensional Brownian motions.

Let us define the empirical density by 
\begin{equation}\label{def:empirical_measure}
 \mu_t^N := \frac{1}{N}\sum_{j=1}^N \delta_{(X^{j,N}_t, \Theta^{j,N}_t)}
\end{equation}
with $\delta$ being the Dirac distribution. Now the previous system can alternatively be written as
\begin{align}
    \begin{cases}
    dX^{k,N}_t 
    &= 
    \int_{\R^d \times \R}U(X^k_t, y, \Theta^k_t, \theta) \mu^N_t(dy d\theta) dt 
    + 
    \int_{\R^d \times \R} \sigma_{sp}(X^k_t, y, \Theta^k_t, \theta)\mu^N_t(dy d\theta) dB^{sp,k}_t,
    \\\
    d\Theta^{k,N}_t 
    &= \int_{\R^d \times \R} V(X^k_t, y, \Theta^k_t, \theta) \mu^N_t(dy d\theta) dt 
    +
    \int_{\R^d \times \R}\sigma_{op}(X^k_t, y, \Theta^k_t, \theta)\mu^N_t(dy d\theta) dB^{op,k}_t, 
    \\\
    \mu_t^N
    &= \frac{1}{N}\sum_{j=1}^N \delta_{(X^{j,N}_t, \Theta^{j,N}_t)}. 
    \end{cases}
\end{align}
Utilizing the law of large numbers we will show that the mean-field limit of the system is of the form
\begin{align}\label{lim_system}
    \begin{cases}
    d\overline{X}_t &= \left(\int_{\R^d \times \R}  U(\overline{X}_t, y, \overline{\Theta}_t, \theta) \mu_t(dy d\theta) \right)dt +  \left(\int_{\R^d \times \R}\sigma_{sp}(\overline{X}_t, y, \overline{\Theta}_t, \theta)\mu_t(dyd\theta)\right)dB^{sp}_t, \\\
    d\overline{\Theta}_t &= \left(\int_{\R^d \times \R} V(\overline{X}_t, y, \overline{\Theta}_t, \theta) \mu_t(dy d\theta) \right)dt +  \left(\int_{\R^d \times \R}\sigma_{op}(\overline{X}_t, y, \overline{\Theta}_t, \theta)\mu_t(dyd\theta)\right) dB^{op}_t,\\\
    \overline{\mu}_t &= \text{Law}(\overline{X}_t, \overline{\Theta}_t), \\\
    \overline{X}_{t=0} &= X_0, \, \overline{\Theta}_{t=0}=\Theta_0,
    \end{cases}
\end{align}
for suitable initial values $X_0, \Theta_0$ and a $(d+1)$-dimensional Brownian motion $(B^{sp}_t, B^{op}_t)$. 

\begin{Remark}
One can write the system more compactly by summarizing the state of each agent into a single $\R^{d+1}$-valued random variable $Y = (X, \Theta)$. In order to do this,  we define the combined interaction map to be 
\begin{align*}
    W: \R^d \times \R \times \R^d \times \R \to \R^{d+1}, \quad (x_1, \theta_1, x_2, \theta_2) \mapsto (U(x_1, x_2, \theta_1,\theta_2), V(x_1, x_2, \theta_1, \theta_2)).
\end{align*}
With this notation we can write the system \eqref{lim_system} as
\begin{align}\label{reduced-system}
    \begin{cases}
    dY_t &= \left(\int_{\R^{d+1}}W(Y_t, y, \theta) \mu_t(d(y,\theta)) \right)dt + \left( 
    \int_{\R^{d+1}}\Tilde{\sigma}(Y_t, y, \theta) \mu_t (d(y, \theta)) \right) d\Tilde{B}_t, \\\
    \mu_t &= \text{Law}(Y_t),
    \end{cases}
\end{align}
where $\Tilde{B}$ is a $(d+1)$-dimensional standard Brownian motion and $\tilde{\sigma}$ is a $(d+1)\times (d+1)$ diagonal matrix with $\text{diag}(\Tilde{\sigma_{sp}(X_t, \Theta_t)}) = (\sigma_{sp}(X_t, \Theta_t), ..., \sigma_{sp}(X_t, \Theta_t), \sigma_{op}(X_t, \Theta_t))$. 
\newline 
In the case of additive noise, if $U,V$ are assumed to be Lipschitz continuous, then $W$ is also a Lipschitz function and one can apply the classical existence and uniqueness and convergence results from \cite{sznitman_topics_1991} directly to $(\ref{reduced-system})$.   With the focus on the different interpretation of processes $X_t$ and $\Theta_t$ from the application point of view, we will keep the separate notion which enables us to directly see the limiting equations for position and opinion.
\end{Remark}

\subsection{Well-posedness result of the coupled mean-field SDE}

Here, we will study the well-posedness of the limiting stochastic differential equations given by \eqref{lim_system}.

The proof  is based on the  classical results on mean-field theory (see e.g. for additive noise \cite{sznitman_topics_1991}) and  the classical existence and uniqueness theory for SDEs via the standard fixed-point argument.

In order to set up our fixed point argument we, let us define 
\begin{align*}
    \calP^2
    &:=
    \calP^2(C([0,T]; \R^{d+1}))
    \\\
    &:= 
    \left\{ 
        \mu: \mu \text{ is a probability measure on } C([0,T];\R^{d+1})
        \text{ s.t. } \int{\norm{x}_{\infty}^2\mu(dx) < \infty}. 
    \right\}. 
\end{align*}
 Furthermore, we need to equip this space with a notion of distance, that turns $\calP^2$ into a complete metric space. 
To make use of a common tool for this type of proof, namely Gronwall's inequality, we need to consider a whole family of distances (which are not necessarily metrics due to a lack of definiteness). To be precise we define the \textit{truncated 2-Wasserstein distance} for $t \in [0,T]$  by 
\begin{align}\label{2Wass_def}
    D_t^2(\mu_1, \mu_2) := \inf_{\mu}
    \int_{\mathcal{C} \times \mathcal{C}} \left(\sup_{s \leq t} (\norm{X_s(\omega_1) - X_s(\omega_2)}^2 + \abs{\Theta_s(\omega_1) - \Theta_s(\omega_2)}^2) \right)d\mu(\omega_1, \omega_2),
\end{align}
where we take the infimum over all couplings  $\mu$ of $\mu_1$ and $\mu_2$ and $X_s(\cdot), \Theta_s(\cdot)$ are the projections onto the time $s$ marginal of the $d$-dimensional component and the $1$-dimensional component respectively.
Note that for $t=T$, i.e. if we don't truncate, we obtain the standard $2$-Wasserstein metric $\calW_2^2(\cdot, \cdot)$ on $\calP^2$.

\begin{Remark}\label{rem:Lipschitz}
    Observe that the Lipschitz Assumption \ref{IDassumption} on the coefficients also implies, that the induced maps defined by 
    \begin{align}
        \hat{U}: \R^d \times \R \times \calP^2(\R^{d+1}) \to \R^{d}, 
        \quad 
        (x, \theta, \mu) 
        \mapsto 
        \int_{\R^{d+1}}U(x, y, \theta, \eta)\mu(dy d\theta),
    \end{align}
   and analogously  $\hat{V}, \hat{\sigma}_{sp}, \hat{\sigma}_{op}$, 
    satisfy a Lipschitz-type inequality (\ref{wasserstein-lipschitz-inequality}) with respect to the product metric, when we equip $\calP^2$ with the $2$-Wasserstein metric.  
    Indeed, for $(x_1, \theta_1, \mu_1), (x_2, \theta_2, \mu_2)$ we have by Jensen's inequality
    \begin{align*}
        \norm{\hat{U}(x_1, \theta_1, \mu_1)\!\! -\!\! \hat{U}(x_2, \theta_2, \mu_2)}^2 \!\!
        \leq \!\!
        \int
        \norm{U(x_1, y_1, \theta_1, \eta_1)\! -\! U(x_2, y_2, \theta_2, \eta_2) }^2
        \pi(d(y_1, \theta_1)d(y_2, \theta_2)), 
    \end{align*}
    where $\pi$ is an arbitrary coupling of $\mu_1, \mu_2$. We can now use the Lipschitz assumption on $U$ to get 
    \begin{align*}
        &\int
        \norm{U(x_1, y_1, \theta_1, \eta_1) - U(x_2, y_2, \theta_2, \eta_2) }^2
        \pi(d(y_1, \theta_1)d(y_2, \theta_2)) 
        \\\
        \leq 
        &2L^2\norm{x_1 - x_2} + 2L^2\norm{\theta_1 - \theta_2} +
        2L^2\int
        \left(\norm{y_1 - y_2}^2 + \norm{\eta_1 - \eta_2}^2\right) \pi(d(y_1, \theta_1)d(y_2, \theta_2)).
    \end{align*}
    Now we can take the infimum over all couplings $\pi$ of $\mu_1$ and $\mu_2$ on the right hand side to obtain 
    \begin{align*}
        \label{wasserstein-lipschitz-inequality}
        &\norm{
        \int_{\R^{d+1}}U(x_1,y_1, \theta_1, \eta_1)\mu_1(dy_1 d\eta_1) 
        - 
        \int_{\R^{d+1}}U(x_2,y_2, \theta_2, \eta_2)\mu_2(dy_2 d\eta_2) 
        }^2
        \\\
        \leq
        &2L^2(\norm{x_1-x_2}^2 + \norm{\theta_1 - \theta_2}^2+\calW^2_2(\mu_1, \mu_2)). 
        \nonumber 
    \end{align*}
\end{Remark}

As already announced, to show the well-posedness of \eqref{lim_system}, we rely on Banach's classical fixed-point theorem. Therefore we need to make sure that the metric space $(\calP^2, D^2_T)$ is sufficiently regular. 

\begin{Lemma}\label{Lemma:completeMetricSpace}
$(\calP^2, D^2_T)$ is a complete metric space. 
\end{Lemma}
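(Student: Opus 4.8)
The plan is to recognize $(\calP^2, D_T)$, where $D_T := \sqrt{D_T^2}$, as the classical $2$-Wasserstein space over the path space $\mathcal{C} = C([0,T];\R^{d+1})$ and to verify completeness by the standard coupling argument, keeping everything self-contained. The starting observation is that $\mathcal{C}$, equipped with the supremum norm $\norm{\cdot}_\infty$, is a separable Banach space and hence a complete separable (Polish) metric space; this is the only structural input we need. Since $t = T$ corresponds to no truncation, $D_T$ coincides with the genuine $2$-Wasserstein metric $\calW_2$ on $\calP^2$, so the definiteness issues flagged for $t < T$ do not arise here.

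First I would verify the metric axioms. Symmetry is immediate from the definition. For definiteness, $D_T(\mu,\mu)=0$ follows by using the diagonal coupling, i.e. the image of $\mu$ under $\omega \mapsto (\omega,\omega)$, while $D_T(\mu_1,\mu_2)=0$ forces $\mu_1=\mu_2$: since the cost is lower semicontinuous and the set of couplings with fixed marginals is tight (each marginal being tight on the Polish space $\mathcal{C}$), an optimal coupling exists, and vanishing cost makes it concentrate on the diagonal, so the two marginals agree. The triangle inequality follows from the gluing lemma: given couplings of $(\mu_1,\mu_2)$ and $(\mu_2,\mu_3)$, one disintegrates along the common $\mu_2$-marginal to build a measure on $\mathcal{C}^3$ with the correct pairwise marginals, and then Minkowski's inequality applied to the pathwise $L^2$-distance yields $D_T(\mu_1,\mu_3) \le D_T(\mu_1,\mu_2) + D_T(\mu_2,\mu_3)$.

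The substance of the lemma is completeness, which I would establish by a telescoping-coupling construction. Given a $D_T$-Cauchy sequence $(\mu_n)$, I extract a subsequence $(\mu_{n_k})$ with $D_T(\mu_{n_k},\mu_{n_{k+1}}) \le 2^{-k}$. Choosing optimal couplings between consecutive terms and gluing them iteratively over the Polish space $\mathcal{C}$, I realize all of them on one probability space $(\Omega,\calF,\mathbb{P})$ as $\mathcal{C}$-valued random variables $Y_k$ with $\mathrm{Law}(Y_k)=\mu_{n_k}$ and $\norm{Y_{k+1}-Y_k}_{L^2(\Omega;\mathcal{C})} \le 2^{-k}$. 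The telescoping bound shows $(Y_k)$ is Cauchy in the Banach space $L^2(\Omega;\mathcal{C})$, whose completeness is inherited from that of $\mathcal{C}$; let $Y_\infty$ be the limit and set $\mu_\infty := \mathrm{Law}(Y_\infty)$. Finiteness of the second moment of $\mu_\infty$, hence $\mu_\infty \in \calP^2$, is immediate from $L^2$-convergence. Using that $\calW_2(\mathrm{Law}(A),\mathrm{Law}(B)) \le \norm{A-B}_{L^2}$ for any coupling, we get $D_T(\mu_{n_k},\mu_\infty) \le \norm{Y_k-Y_\infty}_{L^2} \to 0$; since a Cauchy sequence with a convergent subsequence converges to the same limit, $\mu_n \to \mu_\infty$ in $D_T$.

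The main obstacle is the gluing/telescoping step, where the consecutive optimal couplings must be assembled into a single jointly distributed sequence $(Y_k)$ on a common probability space. This requires disintegration of probability measures, which is exactly where the Polish (separable, complete) structure of the path space $\mathcal{C}$ is essential; once this construction is in place, completeness reduces to the elementary completeness of $L^2(\Omega;\mathcal{C})$.
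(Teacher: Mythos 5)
Your proposal is correct, but it takes a different route from the paper: the paper's entire proof consists of observing that the supremum distance makes $\mathcal{C}=C([0,T];\R^{d+1})$ a complete separable metric space and then invoking Theorem 6.18 of Villani's \emph{Optimal Transport: Old and New}, which asserts that the $2$-Wasserstein space over a Polish space is itself complete (and Polish). What you have written is, in effect, a self-contained proof of that cited theorem in the special case at hand: the verification of the metric axioms via existence of optimal couplings and the gluing lemma, and the completeness argument via a rapidly Cauchy subsequence realized on a single probability space by iterated gluing, followed by completeness of $L^2(\Omega;\mathcal{C})$ and the elementary bound $\calW_2(\mathrm{Law}(A),\mathrm{Law}(B))\leq \norm{A-B}_{L^2}$. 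Both are sound; the paper's version buys brevity at the cost of an external reference, while yours makes explicit exactly where the Polish structure of the path space enters (disintegration for the gluing step, separability for measurability of the constructed sequence) and is the argument one would have to write out if the reference were unavailable. The one step you should be careful to justify fully in a written version is the assembly of the countably many consecutive optimal couplings into a single jointly distributed sequence $(Y_k)_{k\in\N}$, which requires an Ionescu--Tulcea or Kolmogorov-extension type construction rather than a single application of the two-measure gluing lemma; you flag this correctly, and it does go through on Polish spaces.
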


The proof of this lemma is standard and is given in Appendix A.1.

In addition, for the proof of the next theorem we need the following a priori estimate.
\begin{Lemma}\label{lemma:a-priori-estimate}
Assume that the coefficients $U,V, \sigma_{op}, \sigma_{sp}$ are bounded by some positive constant $K>0$ and that the initial values $X_0, \Theta_0$ satisfy $\mathbb{E}[\norm{X_0}^2]<\infty$ and $\mathbb{E}[\abs{\Theta_0}^2] < \infty$. Then for any $T>0$ there exists a constant $C = C(T,K)>0$ such that every solution $(X,\Theta)$ to (\ref{lim_system}) satisfies
\begin{align*}
    \mathbb{E}\left[\sup_{t \in [0,T]}\norm{X_t}^2+\abs{\Theta_t}^2\right] \leq C \left(\mathbb{E}\left[\norm{X_0}^2+\abs{\Theta_0}^2\right] + K^2(T^2+ T)\right),
\end{align*}
In particular, it holds that $Law(X, \Theta) \in \calP^2$. 
\end{Lemma}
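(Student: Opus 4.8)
The plan is to exploit that, under the boundedness hypothesis, the effective drift and diffusion coefficients of the McKean--Vlasov system \eqref{lim_system} are bounded by $K$ \emph{uniformly in the state and in the measure}; consequently the estimate follows directly from the It\^o formulation and does \emph{not} require Gronwall's inequality. First I would rewrite the two lines of \eqref{lim_system} in integral form, setting
\[
b^{sp}_s := \int U(\barX_s, y, \bartheta_s, \theta)\,\mu_s(dy\,d\theta), \qquad \Sigma^{sp}_s := \int \sigma_{sp}(\barX_s, y, \bartheta_s, \theta)\,\mu_s(dy\,d\theta),
\]
and analogously $b^{op}_s, \Sigma^{op}_s$, so that $\barX_t = X_0 + \int_0^t b^{sp}_s\,ds + \int_0^t \Sigma^{sp}_s\,dB^{sp}_s$ and likewise for $\bartheta_t$. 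Since $\mu_s$ is a probability measure and the coefficients are bounded by $K$, Jensen's inequality gives the pointwise bounds $\norm{b^{sp}_s}, \norm{\Sigma^{sp}_s}, \abs{b^{op}_s}, \abs{\Sigma^{op}_s} \le K$ for every $s$, irrespective of $(\barX_s,\bartheta_s)$ and of $\mu_s$.

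Next I would estimate the spatial component via $\norm{\barX_t}^2 \le 3\big(\norm{X_0}^2 + \norm{\int_0^t b^{sp}_s\,ds}^2 + \norm{\int_0^t \Sigma^{sp}_s\,dB^{sp}_s}^2\big)$, then take the supremum over $t\in[0,T]$ followed by the expectation. For the drift term the triangle inequality and Cauchy--Schwarz yield $\sup_{t\le T}\norm{\int_0^t b^{sp}_s\,ds}^2 \le T\int_0^T \norm{b^{sp}_s}^2\,ds \le K^2 T^2$. For the martingale term, since the integrand is bounded it defines a square-integrable martingale, so Doob's $L^2$-maximal inequality followed by It\^o's isometry gives $\bbE[\sup_{t\le T}\norm{\int_0^t \Sigma^{sp}_s\,dB^{sp}_s}^2] \le 4\,\bbE[\int_0^T \norm{\Sigma^{sp}_s}^2\,ds] \le 4 d K^2 T$, where the dimensional factor comes from the matrix-norm convention and will be absorbed into the final constant. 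The opinion component $\bartheta_t$ is handled identically (with $d=1$).

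Adding the two resulting bounds and collecting terms produces $\bbE[\sup_{t\le T}(\norm{\barX_t}^2 + \abs{\bartheta_t}^2)] \le C(\bbE[\norm{X_0}^2 + \abs{\Theta_0}^2] + K^2(T^2 + T))$ for a constant $C = C(T,K)$, which is the claimed estimate. The final assertion $\mathrm{Law}(\barX,\bartheta)\in\calP^2$ is then immediate: the left-hand side equals $\int_{\mathcal{C}} \norm{\omega}_\infty^2\,\mathrm{Law}(\barX,\bartheta)(d\omega)$, and its finiteness (guaranteed because $X_0,\Theta_0$ are square-integrable) is exactly the defining condition of $\calP^2 = \calP^2(C([0,T];\R^{d+1}))$.

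The argument is essentially routine, and the point I would be most careful about is that the supremum sits \emph{inside} the expectation in the martingale term, so a plain It\^o isometry does not suffice and one genuinely needs Doob's maximal inequality (or BDG). The conceptual observation worth highlighting is that boundedness of the coefficients, rather than mere linear growth, is what lets one avoid the usual circular dependence through $\mu_s = \mathrm{Law}(\barX_s,\bartheta_s)$ and dispense with Gronwall entirely; under a linear-growth assumption one would instead have to close the estimate by a Gronwall step after bounding the effective coefficients in terms of the running second moment.
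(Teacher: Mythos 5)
Your proof is correct and follows essentially the same route as the paper's: Cauchy--Schwarz for the drift term and a martingale maximal inequality for the diffusion term (your Doob-plus-It\^o-isometry combination is just the $p=2$ case of the Burkholder--Davis--Gundy inequality the paper invokes). The only difference is that the paper first localizes with a stopping time $\tau_n$ and then lets $n\to\infty$, whereas you argue directly; this is legitimate here because the boundedness of the coefficients makes the stochastic integrals genuine square-integrable martingales, so the maximal inequality applies without localization.
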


The proof of this lemma is classic and it is based on the Burkholder-Davis-Gundy inequality. For the completeness we wrote the proof in Appendix A.1. Now we can state the well-posedness result:

\begin{Theorem}\label{theorem:existence-uniqueness}
Let $X_0$ and $\Theta_0$ be $\R^d$-valued, respectively $\R$-valued, random variables with finite second moment i.e. 
\begin{align*}
    \mathbb{E}\left[\norm{X_0}^2\right] < \infty, \quad \mathbb{E}\left[\abs{\Theta_0}^2\right] < \infty. 
\end{align*}
Under Assumption \ref{IDassumption}, there exists a unique (pathwise and in law) solution to equation $\eqref{lim_system}$.
\end{Theorem}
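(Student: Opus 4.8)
The plan is to follow the classical Sznitman-type fixed-point strategy, adapted here to the multiplicative-noise setting. First I would define a map $\Phi$ on the complete metric space $(\calP^2, D_T^2)$ from Lemma \ref{Lemma:completeMetricSpace}: given $\mu \in \calP^2$ with time-marginals $(\mu_t)_{t \in [0,T]}$, I freeze these marginals inside the coefficients and consider the \emph{decoupled} SDE
\begin{align*}
    d\barX_t &= \hat{U}(\barX_t, \bartheta_t, \mu_t)\, dt + \hat{\sigma}_{sp}(\barX_t, \bartheta_t, \mu_t)\, dB^{sp}_t, \\
    d\bartheta_t &= \hat{V}(\barX_t, \bartheta_t, \mu_t)\, dt + \hat{\sigma}_{op}(\barX_t, \bartheta_t, \mu_t)\, dB^{op}_t,
\end{align*}
with $(\barX_0, \bartheta_0) = (X_0, \Theta_0)$. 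Since $\mu$ is fixed, the maps $(x,\theta) \mapsto \hat{U}(x,\theta,\mu_t)$, and similarly $\hat{V}, \hat{\sigma}_{sp}, \hat{\sigma}_{op}$, are bounded and globally Lipschitz in the state variables by Remark \ref{rem:Lipschitz}, so the classical existence-and-uniqueness theory for SDEs with Lipschitz coefficients yields a unique strong solution $(\barX, \bartheta)$. I then set $\Phi(\mu) := \mathrm{Law}(\barX, \bartheta)$. The a priori estimate of Lemma \ref{lemma:a-priori-estimate} guarantees $\bbE[\sup_{t \leq T}(\norm{\barX_t}^2 + \abs{\bartheta_t}^2)] < \infty$, hence $\Phi(\mu) \in \calP^2$, so $\Phi$ maps $\calP^2$ into itself. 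A fixed point $\mu = \Phi(\mu)$ is then precisely a solution of \eqref{lim_system}, since the frozen law coincides with the law of the solving process.

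The core of the argument is the contraction-type estimate for the family of truncated distances. I would take two measures $\mu^1, \mu^2 \in \calP^2$, drive the two decoupled solutions $(\barX^i, \bartheta^i)$ by the \emph{same} Brownian motion and the same initial condition, and estimate $\bbE[\sup_{s \leq t}(\norm{\barX^1_s - \barX^2_s}^2 + \abs{\bartheta^1_s - \bartheta^2_s}^2)]$. Splitting each drift and diffusion difference into a \emph{state part} (same frozen measure, different states) and a \emph{measure part} (same state, different frozen marginals), the state part is controlled by the Lipschitz continuity of $\hat{U}, \hat{V}, \hat{\sigma}_{sp}, \hat{\sigma}_{op}$ in $(x,\theta)$, while the measure part is controlled by the Wasserstein-Lipschitz estimate from Remark \ref{rem:Lipschitz}, which yields a contribution bounded by $\calW_2^2(\mu^1_s, \mu^2_s) \leq D_s^2(\mu^1, \mu^2)$. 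Controlling the stochastic integrals via the Burkholder-Davis-Gundy inequality and the drift terms via Cauchy-Schwarz, and then applying Gronwall's inequality to absorb the state terms, produces
\begin{align*}
    D_t^2(\Phi(\mu^1), \Phi(\mu^2)) \leq C \int_0^t D_s^2(\mu^1, \mu^2)\, ds
\end{align*}
for a constant $C = C(T, L)$ depending only on the Lipschitz constant and the time horizon.

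From this integral inequality I would conclude by iteration: composing $\Phi$ with itself $n$ times gives $D_T^2(\Phi^n(\mu^1), \Phi^n(\mu^2)) \leq \frac{(CT)^n}{n!}\, D_T^2(\mu^1, \mu^2)$, so for $n$ large enough $\Phi^n$ is a strict contraction on the complete metric space $(\calP^2, D_T^2)$. The generalized Banach fixed-point theorem then furnishes a unique fixed point $\mu$ of $\Phi$, giving both existence and uniqueness in law of the solution to \eqref{lim_system}. Pathwise uniqueness follows because, once the law $\mu$ is fixed, the remaining equation is a standard Lipschitz SDE, for which pathwise uniqueness is classical.

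I expect the main obstacle to be the treatment of the diffusion term, which is the genuinely new feature relative to the additive-noise setting of \cite{sznitman_topics_1991}: because $\sigma_{sp}$ and $\sigma_{op}$ depend on both the state and the frozen measure, the difference of the two stochastic integrals no longer cancels and must be controlled via Burkholder-Davis-Gundy simultaneously in its state-dependence (for the Gronwall step) and its measure-dependence (for the Wasserstein term). Care is also needed to keep the constants in the Gronwall and iteration steps uniform in $t \in [0,T]$, so that the factorial decay $(CT)^n/n!$ is genuinely obtained and the iteration argument closes.
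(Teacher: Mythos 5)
Your proposal is correct and follows essentially the same route as the paper's own proof: the fixed-point map $\Phi$ on $(\calP^2, D_T^2)$ obtained by freezing the measure in the coefficients, the Wasserstein--Lipschitz estimate of Remark \ref{rem:Lipschitz} combined with a Gronwall argument to reach the integral inequality $D_T^2(\Phi(\mu^1),\Phi(\mu^2)) \leq C\int_0^T D_r^2(\mu^1,\mu^2)\,dr$, Banach's fixed-point theorem (via iteration), and pathwise uniqueness from the resulting standard Lipschitz SDE. The only cosmetic difference is that you invoke Burkholder--Davis--Gundy where the paper uses Doob's $L^2$-inequality for the martingale term, which is an equivalent choice here.
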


As already announced, the proof of the existence is based on the fixed point argument and it can be found in a more general setting. Nevertheless, for the completeness we sketch in the Appendix 1.1 the basic idea of the proof in our setting that is simpler than in the general setting, and hence easier accessible.

\subsection{Convergence of the microscopic model to the mean-field equation}

In this section we will prove that the 
 the system of coupled SDEs for fixed $N \in \mathbb{N}$ given by \eqref{eq:agents} indeed converges to the mean-field limit, i.e. to show the so called propagation of chaos. For $N \in \N$ we denote the (measure-valued) empirical process of this system by $\mu^N= (\mu^N_t)_{t \geq 0}$, i.e. for $t\geq 0$ we set
\begin{align}
    \mu_t^N := \frac{1}{N}\sum_{i=1}^N \delta_{(X^{i,N}_t, \Theta^{i,N}_t)}.
\end{align}

For fixed $i$ we consider the process $(\overline{X}^i_t, \overline{\Theta}^i_t)_{t \geq 0}$ that solves 
\begin{align*}
    \begin{cases}
    d\overline{X}^i_t 
    &= 
    \left(\int_{\R^{d+1}}
        U(\overline{X}^i_t, y, \overline{\Theta}^i_t, \theta) 
    \mu_t(dy \ d\theta) 
    \right) 
    dt
    +
    \left(
        \int_{\R^{d+1}}
            \sigma_{sp}(\overline{X}^i_t, y, \overline{\Theta}^i_t, \theta) 
        \mu_t(dy \ d\theta)   
    \right)
    dB^{i, sp}_t
    , 
    \\\
    d\overline{\Theta}^i_t 
    &=
    \left(
        \int_{\R^{d+1}}
            V(\overline{X}^i_t, y, \overline{\Theta}^i_t,\theta)
        \mu_t(dy \ d\theta)
    \right)
    dt
    +
    \left(
        \int_{\R^{d+1}}
            \sigma_{op}(\overline{X}^i_t, y, \overline{\Theta}^i_t, \theta) 
        \mu_t(dy \ d\theta)  
    \right)
    dB^{i, op}_t,
    \\\
    {\mu}_t 
    &=
    \text{Law}(\overline{X}_t^i, \overline{\Theta}_t^i). 
    \end{cases}
\end{align*}

The existence of $(\overline{X}^i_t, \overline{\Theta}^i_t)_{t \geq 0}$ follows from Theorem \ref{theorem:existence-uniqueness} and ${\mu} = ({\mu}_t)_{t \geq 0}$ does not depend on $i$. From now on we will denote the law of $(\overline{X}^i_t, \overline{\Theta}^i_t)_{t \geq 0}$ by ${\mu}$.

Next, we will study the case when $N$ tends to $\infty$. The key idea for the proof is to use the law of large numbers (LLN) for empirical measures $\overline{\mu}^N$ of i.i.d. copies of the process $(\overline{X}^i, \overline{\Theta}^i)$, cf. \cite[Theorem 11.4.1]{dudley_real_2002}. We will see that by a uniform integrability argument, LLN also implies that 
\begin{align*}
    \mathbb{E}\left[D^2_T({\mu}, \overline{\mu}^N)\right] \to 0 \quad \text{as } N \to \infty. 
\end{align*}
The rest of the needed estimates are then the same as in the proof of Theorem \ref{theorem:existence-uniqueness} and are mainly done for the purpose of showing that 
\begin{align*}
    \mathbb{E}\left[D_T^2({\mu}, \mu^N)\right]
    \leq 
    C \mathbb{E}\left[D^2_T({\mu}, \overline{\mu}^N)\right]
\end{align*}
for some constant $C>0$. 
\begin{Theorem}\label{theorem:convergence-microscopic-model}
For any $i \in \N$ we have 
\begin{align*}
    \mathbb{E}\left[
    \sup_{0\leq s \leq t}\norm{X^{i,N}_s - \overline{X}^i_s}^2 + \abs{\Theta^{i,N}_s - \overline{\Theta}^i_s}^2
    \right] \to 0 \quad \text{as } N \to \infty. 
\end{align*}
Moreover, we  have 
\begin{align}
    \lim_{N \to \infty} \mathbb{E}\left[D_T^2({\mu}, \mu^N)\right] =0.
\end{align}
\end{Theorem}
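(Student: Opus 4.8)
The plan is to prove propagation of chaos via a coupling argument: we run the $N$-particle system $(X^{i,N}, \Theta^{i,N})$ and the $N$ independent nonlinear copies $(\overline{X}^i, \overline{\Theta}^i)$ on the \emph{same} Brownian motions $(B^{i,sp}, B^{i,op})$ with the same initial data. The quantity to control is $\bbE[\sup_{s\le t}\norm{X^{i,N}_s - \overline{X}^i_s}^2 + \abs{\Theta^{i,N}_s - \overline{\Theta}^i_s}^2]$. Subtracting the two systems and splitting the difference of the drift (and diffusion) integrals, I would insert an intermediate term so as to write the drift difference as a sum of two pieces: one comparing $U(X^{i,N}_s, X^{j,N}_s, \cdots)$ with $U(\overline{X}^i_s, X^{j,N}_s, \cdots)$ — controlled by the Lipschitz property of $U$ in its first/third arguments, giving a factor of the pathwise difference of agent $i$ — and a second piece comparing $\frac1N\sum_j U(\overline{X}^i_s, X^{j,N}_s, \cdots)$ with $\int U(\overline{X}^i_s, y, \overline{\Theta}^i_s, \theta)\,\mu_s(dy\,d\theta)$, which is exactly a difference between an empirical average and its mean-field expectation.

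First I would handle the first piece using Jensen/Cauchy--Schwarz together with the Lipschitz bound, bounding its contribution by $C\,\bbE[\sup_{r\le s}\norm{X^{i,N}_r - \overline{X}^i_r}^2 + \abs{\Theta^{i,N}_r - \overline{\Theta}^i_r}^2]$ plus a term involving $\frac1N\sum_j(\cdots)$ for the $j$-indices, i.e. a term bounded by $D^2_t(\mu, \mu^N)$. For the stochastic-integral contributions I would apply the Burkholder--Davis--Gundy inequality (exactly as in Lemma \ref{lemma:a-priori-estimate}) to convert the $\sup$ of the It\^o integral into a time integral of the squared integrand, which is then bounded by the same Lipschitz-plus-$D^2$ estimate. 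For the second, genuinely mean-field piece I would introduce the auxiliary empirical measure $\overline{\mu}^N_s := \frac1N\sum_j \delta_{(\overline{X}^j_s, \overline{\Theta}^j_s)}$ built from the i.i.d. nonlinear copies, and use the Lipschitz-type inequality from Remark \ref{rem:Lipschitz} to bound the integrated difference by $\calW_2^2(\mu_s, \mu^N_s) \le D^2_t(\mu, \mu^N)$, and then a triangle-type step to split $D^2_t(\mu,\mu^N) \le 2D^2_t(\mu, \overline{\mu}^N) + 2D^2_t(\overline{\mu}^N, \mu^N)$, where the last term is dominated by the coupled pathwise differences themselves.

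Collecting everything yields, for a suitable $C = C(T,L) > 0$, an inequality of the form
\begin{align*}
    \bbE\!\left[\sup_{s\le t}\norm{X^{i,N}_s - \overline{X}^i_s}^2 + \abs{\Theta^{i,N}_s - \overline{\Theta}^i_s}^2\right]
    \le
    C\!\int_0^t \bbE\!\left[\sup_{r\le s}\norm{X^{i,N}_r - \overline{X}^i_r}^2 + \abs{\Theta^{i,N}_r - \overline{\Theta}^i_r}^2\right] ds
    + C\,\bbE\!\left[D^2_T(\mu, \overline{\mu}^N)\right],
\end{align*}
to which Gronwall's inequality applies, giving the bound $C\,e^{CT}\,\bbE[D^2_T(\mu, \overline{\mu}^N)]$ uniformly in $i$. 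The proof then closes by invoking the law of large numbers for empirical measures of i.i.d. $\mathcal{C}$-valued random variables (Glivenko--Cantelli in Wasserstein, \cite[Theorem 11.4.1]{dudley_real_2002}), which gives $\overline{\mu}^N \to \mu$ and, via the a priori $\calP^2$ moment bound of Lemma \ref{lemma:a-priori-estimate} to furnish uniform integrability, upgrades this to $\bbE[D^2_T(\mu, \overline{\mu}^N)] \to 0$. The second display of the theorem follows since $\bbE[D^2_T(\mu,\mu^N)] \le 2\bbE[D^2_T(\mu,\overline{\mu}^N)] + 2\bbE[D^2_T(\overline{\mu}^N, \mu^N)]$ and the last term is bounded by the coupled differences, which we have just shown vanish.

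The main obstacle I anticipate is the bookkeeping in the second, mean-field piece: one must be careful that the empirical-average-versus-expectation term is disentangled from the pathwise-difference term without circularity, since $\mu^N$ and $\overline{\mu}^N$ both appear and $D^2_T(\overline{\mu}^N,\mu^N)$ must ultimately be reabsorbed into the Gronwall left-hand side rather than estimated independently. Ensuring the constant $C$ stays independent of $N$ and $i$ throughout — in particular that the $\frac1N$ prefactors in the diffusion terms behave correctly under BDG — is the delicate point; the LLN convergence $\bbE[D^2_T(\mu,\overline{\mu}^N)]\to 0$ itself is classical and I would cite rather than reprove it.
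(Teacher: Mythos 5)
Your proposal follows essentially the same route as the paper's proof in Appendix A.2: couple the $N$-particle system with i.i.d.\ nonlinear copies driven by the same Brownian motions, use the Lipschitz-type inequality of Remark \ref{rem:Lipschitz} plus a maximal inequality for the stochastic integrals to set up a Gronwall estimate, introduce the auxiliary empirical measure $\overline{\mu}^N$ of the i.i.d.\ copies together with the triangle inequality for $D_T^2$, and close with the LLN for empirical measures and a uniform integrability argument. The only differences are cosmetic (BDG versus Doob's $L^2$-inequality, and folding the two Gronwall applications of the paper into one), so the argument is correct as proposed.
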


The details of the proof are presented in Appendix 1.2.

\section{Characterization of the empirical measure and its limit}\label{sec:EmpiricalMeasure}
In this section we derive the formal stochastic partial differential equation (SPDE) for the empirical measure $\mu^N$, that is the so-called Dean--Kawasaki type equation \cite{dean_langevin_1996, kawasaki1994stochastic}  with multiplicative noise and we derive the McKean-Vlasov type PDE  for the limiting measure $\mu$. Recall that if
 $X = (X_t)_{t \geq 0}$ is a Markov process with generator $\mathcal{L}$, then its law $\mu = (\mu_t)_{t \geq 0}$ is a (weak) solution to the linear PDE \begin{align}
    \partial_t \mu = \mathcal{L}^*\mu, 
\end{align}
also known as the Kolmogorov forward equation. A similar equation still holds true for solutions $(X, \Theta, \mu)$of the type of equation \eqref{lim_system} and empirical measure for \eqref{eq:agents}. However,  the PDE that we will derive will be non-linear.

\subsection{Derivation of the PDE for the law of the coupled mean-field SDEs}

We consider a system of coupled mean-field SDEs given by \eqref{lim_system} and we want to derive equation for the hydrodynamic limit $ \mu_t = \text{Law}(X_t, \Theta_t)$. 

Let  $\phi: \R^d \times \R \to \R$ be  a smooth and compactly supported function.
Then by Itô's formula we have 
\begin{align*}
    & d\phi(X_t, \Theta_t) 
    = \\
    &\!\!\!\left(
        \int U(X_t, y, \Theta_t, \theta) \cdot \nabla_x \phi(X_t, \Theta_t) \mu_t(d y d \theta)
    \right) 
    dt 
    +
    \left(
        \int V(X_t, y, \Theta_t, \theta) \frac{\partial}{\partial \theta}\phi(X_t, \Theta_t) \mu_t(d y d \theta)
    \right) 
    dt \\\
    + 
    &\frac{1}{2}\Bigg(\!\!\sum_{i,j=1}^d \left(\int\sigma_{sp}\!(X_t, y,\Theta_t, \theta)\mu_t(dy d\theta)\! \cdot \!\left(\!\int\! \sigma_{sp}(X_t,y, \Theta_t,\theta)\mu_t(dy d\theta)\right)\!\!^\top \right)_{i,j} \!\!\!\!
    \times \frac{\partial^2}{\partial x_i \partial x_j}\phi(X_t, \Theta_t)\!\!\Bigg) dt 
    \\\
    + 
    &\frac{1}{2}\left(\left(\int \sigma_{op}(X_t,y, \Theta_t, \theta)\mu_t(dy d\theta)\right)^2 \frac{\partial^2}{\partial \theta^2}\phi(X_t, \Theta_t) \right)dt
    \\\
    +&\left((\nabla_x \phi(X_t, \Theta_t))\!\!^\top \cdot \int \sigma_{sp}(X_t,y, \Theta_t, \theta)\mu_t(dy d\theta)  ) \right)dB^{sp}_t
    \\\
    +&\left(\int \sigma_{op}(X_t,y, \Theta_t, \theta)\mu_t(dy d\theta) \cdot \frac{\partial}{\partial \theta} \phi(X_t, \Theta_t)\right) dB^{op}_t . 
\end{align*}

Taking the expectation the martingale part vanishes,  differentiating in $t$ and recalling that $\mu_t = \text{Law}(X_t, \Theta_t)$ we can rewrite the previous equation  as

\begin{align*}
   & \frac{d}{dt}\int \phi(z,\eta) \mu_t(dz d\eta) 
    = \\\
    &\int \bigg[
        \left(
            \int U(z, y, \eta, \theta) \cdot \nabla_z \phi(z, \eta) \mu_t(d y d \theta)
        \right)
        +
        \left(
            \int V(z, y, \eta, \theta) \frac{\partial}{\partial \eta}\phi(z, \eta) \mu_t(d y d \theta)
        \right)
        \\\
        \quad+
        &\frac{1}{2}\Bigg(\sum_{i,j=1}^d \left(\int \sigma_{sp}(X_t, y,\Theta_t, \theta)\mu_t(dy d\theta) \cdot \left(\!\!\int \sigma_{sp}(X_t,y, \Theta_t,\theta)\mu_t(dy d\theta)\right)^\top \right)_{i,j} 
        \\\
        & \quad \times  \frac{\partial^2}{\partial x_i \partial x_j}\phi(X_t, \Theta_t)\Bigg)
        +
        \frac{1}{2}\left(\left(\int \sigma_{op}(X_t,y, \Theta_t, \theta)\mu_t(dy d\theta)\right)^2 \frac{\partial^2}{\partial \theta^2}\phi(X_t, \Theta_t) \right)
        \bigg]
    \mu_t(dz d\eta).
\end{align*}
Assuming that $\mu_t$ is absolutely continuous with respect to Lebesgue measure and that its density $\mu_t(z,\eta)$ is sufficiently regular,  we can apply Fubini's theorem and integration by parts. 
Since $\phi$ is an arbitrary sufficiently regular test-function,  we see that $\mu = (\mu_t)_{t \geq 0}$ is a distribution-valued solution of the PDE 
\begin{equation}\label{equation:MF_lim}
\begin{aligned}
    \partial_t \mu_t(z, \eta) 
    =
    &-\text{div}_z \left(\mu_t(z,\eta) \cdot U(z,\eta, \mu_t)\right)
    -\text{div}_{\eta}\left(\mu_t(z,\eta) \cdot V(z,\eta, \mu_t) \right)
    \\\
    &+ \frac{1}{2}\sum_{i,j=1}^d\frac{\partial^2}{\partial z_i \partial z_j}\left(\mu_t(z,\eta) \cdot \sigma_{sp}(z,\eta, \mu_t) \cdot \sigma_{sp}(z,\eta, \mu_t)^\top\right)_{i,j} 
    \\\
    +  &\frac{1}{2}
        \frac{\partial^2}{\partial \eta^2} \left( \mu_t(z, \eta) \sigma_{op}(z,\eta, \mu_t)^2\right), 
\end{aligned}
\end{equation}
where we use the shorthand notation
\begin{align}\label{short_notation}
    U(z,\eta, \mu_t) &:= \int_{\R^d \times \R}U(z, y, \eta, \theta)\mu_t(dy d\theta), 
    \\\
    V(z, \eta, \mu_t) &:= \int_{\R^d \times \R}V(z, y, \eta, \theta) \mu_t(dy d\theta), 
    \\\
    \sigma_{space}(z, \eta, \mu_t) &:= \int_{\R^d \times \R}\sigma_{space}(z,y, \eta, \theta) \mu_t(dy d\theta), 
    \\\
    \sigma_{op}(z, \eta, \mu_t) &:= \int_{\R^d \times \R}\sigma_{op}(z,y, \eta, \theta) \mu_t(dy d\theta). \label{short_notation_end}
\end{align}

\begin{Remark}
Note that in the case of the additive noise this equation becomes
\begin{align*}
\partial_t \mu_t(z,\eta) 
=&-\text{div}_z\Big( U(z,\eta, \mu_t) \mu_t(z,\eta)\Big)- \text{div}_{\eta}\Big(V(z,\eta,\mu_t) \mu_t(z, \eta)\Big) \\\ \nonumber
&+ \frac{1}{2}\sigma_1^2 \Delta_z \mu_t(z,\eta) + \frac{1}{2}\sigma_2^2\Delta_{\eta}\mu_t(z,\eta), 
\end{align*}
which coincides with the standard result from \cite{sznitman_topics_1991}.
Hence, as expected the only difference are the coefficients of the second order part of the differential operator on the right hand side, which are either constants or functions.
\end{Remark}

\subsection{SPDE description for the empirical measure $\mu^N$}

The goal is to derive a formal SPDE for the empirical measure $\mu^N$, i.e. the analogue to the
Dean-Kawasaki equation, but with multiplicative noise.
More precise, for $N \in  \N$ let $(X^N_t, \Theta_t^N)_{t\geq 0}$ be such that its components $(X^{k,N}_t, \Theta_t^{k,N})_{t\geq 0}$ solve the system \eqref{eq:agents}. 

In the following, we want to (formally) derive the SPDE that is solved by the empirical measure $\mu^N = (\mu^N_t)_{t\geq 0}$ defined by \eqref{def:empirical_measure}.

Let $\phi: \R^d \times \R \to \R$ be a smooth and compactly supported function. By definition of the empirical measure we have 
\begin{align*}
    \int_{\R^d \times \R} \phi(z, \eta) \mu_t^N(dz d\eta) = \frac{1}{N}\sum_{i=1}^N \phi(X^{i,N}_t, \Theta^{i,N}_t). 
\end{align*}
Hence, we will first calculate each of the summands individually. For fixed $i=1,\dots,N$ we denote by $B^{sp,i} = (B^{sp,i,1}, \dots, B^{sp,i,d})$ the components of the driving Brownian motion of the $i$-th agent. By Itô's formula we have 
\begin{align*}
    d\phi(X_t^{i,N}, \Theta_t^{i,N}) 
    &= 
    \int \mu^{i,N}_t(dz d\eta) \Bigg[
    \bigg(
        \int
        U(z,\eta,\mu_t^N) \cdot \nabla_x \phi(z, \eta)
    +   V(z, \eta, \mu^N_t) \frac{\partial}{\partial \theta}\phi(z, \eta)
    \bigg) 
    dt \\\
    + 
    &\frac{1}{2}\left(\sum_{i,j=1}^d \left(\sigma_{sp}(z, \eta, \mu_t^N) \cdot \sigma_{sp}(z, \eta, \mu_t^N)^\top\right)_{i,j} \frac{\partial^2}{\partial z_i \partial z_j}\phi(z, \eta)\right) dt 
    \\\
    + 
    &\frac{1}{2}\left(\sigma_{op}(z, \eta, \mu^N_t)^2 \frac{\partial^2}{\partial \eta^2}\phi(z, \eta) \right)dt
    \\\
    +&\left((\nabla_z \phi(z, \eta))^\top\sigma_{sp}(z, \eta,\mu_t^N) \right)dB^{sp,i}_t 
    +\left(\sigma_{op}(z, \eta, \mu_t^N)\frac{\partial}{\partial \theta} \phi(z, \eta)\right) dB^{op,i}_t
    \Bigg],
\end{align*}
where we again used the shorthand notation introduced in \eqref{short_notation}-\eqref{short_notation_end}.

Assuming that $\mu^{i,N}_t$ is absolutely continuous with respect to Lebesgue measure with a sufficiently smooth density $\mu_t^{i,N}(z, \eta)$ and applying integration by parts with respect to the Lebesgue integral, we obtain
\begin{align*}
    d\phi(X_t^{i,N}, &\Theta_t^{i,N}) 
    =
    \\\
    & \int dz d\eta 
        \phi(z, \eta)
        \Bigg[
        -\text{div}_z \left[U(z,\eta, \mu^N_t) \mu_t^{i,N}(z,\eta)\right]dt
        -\text{div}_\eta \left[V(z, \eta, \mu^N_t) \mu^{i,N}_t(z,\eta)\right]dt
        \\\
        +&\frac{1}{2}\left(\sum_{i,j=1}^N  \frac{\partial^2}{\partial z_i \partial z_j}\left(\left(\sigma_{sp}(z,\eta, \mu^N_t) \cdot \sigma_{sp}(z,\eta, \mu^N_t)^\top\right)_{i,j} \mu^{i,N}_t(z,\eta) \right) \right)dt
        \\\
        +&\frac{1}{2}\frac{\partial^2}{\partial \eta^2}\left(\sigma_{op}(z,\eta, \mu^N_t)^2 \mu^{i,N}(z,\eta)\right)dt
        -
        \sum_{k,l=1}^d \frac{\partial}{\partial z_k}\left(\sigma_{sp}^{l,k}(z,\eta, \mu^N_t)\mu^{i,N}(z,\eta)dB^{sp,i,k}_t \right)
        \\\
        -
        &\frac{\partial}{\partial \eta}\left(\sigma_{op}(z,\eta, \mu^N_t)\mu^{i,N}(z,\eta)dB^{op,i}_t \right)
        \Bigg].
\end{align*}

Utilizing that for each $i=1,\dots, N$ we have 
\begin{align*}
    \frac{d}{dt}\phi(X^{i,N}, \Theta^{i,N}) 
    =
    \int_{\R^{d+1}}dz d\eta \phi(z,\eta) \left[\frac{\partial}{\partial t} \mu^{i,N}_t(z, \eta)\right],
\end{align*}

and after the summation over $i$ we conclude that the empirical measure $(\mu^N_t)_{t\geq 0}$ solves the SPDE
\begin{align*}
    \partial_t \mu^N_t 
    =
    &-\text{div}_z\left[U(z,\eta, \mu^N_t) \mu^N_t(z,\eta) \right]
    -\text{div}_\eta \left[ V(z,\eta, \mu^N_t) \mu^N_t\right] 
    \\\
    &+ \frac{1}{2} \sum_{i,j=1}^N \left( \frac{\partial^2}{\partial z_i \partial z_j}\left(\sigma_{sp}(z,\eta, \mu^N_t) \cdot \sigma_{sp}(z,\eta, \mu^N_t)^\top\right)_{i,j} \mu^{N}_t(z,\eta) \right)
    \\\
    &+ \frac{1}{2}\frac{\partial^2}{\partial \eta^2}\left(\sigma_{op}(z,\eta, \mu^N_t)^2 \mu^N(z,\eta)\right)
    \\\
    &-\frac{1}{N}\sum_{i=1}^N\sum_{k=1}^q \sum_{l=1}^d \frac{\partial}{\partial z_k}\left(\sigma_{sp}^{l,k}(z,\eta,\mu^N_t)\mu^{i,N}_t(z,\eta) dB^{sp,i,k}_t\right)
    \\\
     &-\frac{1}{N}\sum_{i=1}^N\frac{\partial}{\partial \eta}\left(\sigma_{op}(z,\eta) \mu^{i,N}_t(z,\eta) dB^{op,i}_t \right).
\end{align*}
However, this is not yet a closed equation for the empirical measure, because it still depends on the individual trajectories through the noise term. 
As in the case of additive noise \cite{dean_langevin_1996}, we will first calculate the covariance of this noise term and then replace it by a statistically identical term. For this, let's denote the two noise terms by 
\begin{align*}
    \xi^{sp}(t,z,\eta) &:=
    \frac{1}{N}\sum_{i=1}^N\sum_{k=1}^d \sum_{l=1}^d \frac{\partial}{\partial z_k}\left(\sigma_{sp}^{l,k}(z,\eta,\mu^N_t)\mu^{i,N}_t(z,\eta) dB^{sp,i,k}_t\right),
    \\\
    \xi^{op}(t,z,\eta) &:=
    \frac{1}{N}\sum_{i=1}^N\frac{\partial}{\partial \eta}\left(\sigma_{op}(z,\eta) \mu^{i,N}_t(z,\eta) dB^{op,i}_t \right).
\end{align*}
Then, the covariances are given by 
\begin{align*}
    \bbE\left[\xi^{sp}(t,z,\eta)\xi^{sp}(s,y,\theta) \right] &=
    \delta(t-s) \frac{1}{N^2}
    \sum_{i=1}^N \sum_{k=1}^d \sum_{l,g=1}^d 
    \frac{\partial}{z_k} \left(\sigma_{sp}^{l,k}(z,\eta, \mu^N_t) \mu^{i,N}_t(z,\eta) \right) 
    \\\
    &\hspace{4cm}
    \times \frac{\partial}{x_k} \left( \sigma_{sp}^{g,k}(x,\theta, \mu^N_s) \mu^{i,N}_s(x,\theta)  \right)
    ,
    \\\
    \bbE\left[ \xi^{sp}(t,z,\eta) \xi^{op}(s,x,\theta)\right] &= 0
    ,
    \\\
    \bbE\left[ \xi^{op}(t,z,\eta) \xi^{op}(s,x,\theta)\right] &=
    \delta(t-s) \frac{1}{N^2}\sum_{i=1}^N \frac{\partial}{\partial \eta}  \left(\sigma_{op}(z,\eta, \mu^N_t)\mu^{i,N}_t(z,\eta)\right) 
    \\\
    &\hspace{3cm}
    \times \frac{\partial}{\partial \theta} \left(\sigma_{op}(x, \theta, \mu^N_s)\mu^{i,N}_s(x,\theta) \right). 
\end{align*}

First note that by the definition of $\mu^{i,N}_t$ as a Dirac delta distribution, we have for each $i=1,\dots,N$
\begin{align*}
    \mu^{i,N}_t(z,\eta) \mu^{i,N}_t(x, \theta) 
    =
    \delta(z-x)\delta(\eta - \theta)\mu^{i,N}(z,\eta)
    =
    \delta(z-x)\delta(\eta - \theta)\mu^{i,N}(x,\theta). 
\end{align*}
Therefore we can rewrite the non-trivial covariances as 
\begin{align*}
   &\bbE\left[\xi^{sp}(t,z,\eta)\xi^{sp}(s,y,\theta) \right]
     \\\
    &= 
    \delta(t-s)\frac{1}{N}
   \sum_{i=1}^N \sum_{k=1}^d \sum_{l,g=1}^d
        \frac{\partial}{\partial z_k}\frac{\partial}{\partial x_k}\left(\delta(\eta-\theta) \delta(z-x)\sigma^{l,k}_{sp}(z,\eta, \mu^N_t) \sigma_{sp}^{g,k}(z,\eta, \mu^N_t)\mu^{N}_t(z,\eta)\right)
   , 
 \end{align*}
 and
 \begin{align*}
        &\bbE\left[ \xi^{op}(t,z,\eta) \xi^{op}(s,x,\theta)\right] 
    &=
    \delta(t-s) \frac{1}{N}\frac{\partial}{\partial \eta}\frac{\partial}{\partial \theta}
    \left( \delta(z-x) \delta(\eta-\theta) \sigma_{op}(z,\eta, \mu^N_t)^2  \mu^N_t(z,\eta) \right).
\end{align*}
Furthermore, let $\zeta$ be a $(d+1)$-dimensional Gaussian random field indexed by $\R_+ \times \R^d \times \R$ with covariance given by
\begin{align*}
    \bbE \left[ \zeta^i(t,z,\eta) \zeta^j(s,x, \theta)\right]
    = \delta^{i,j}\delta(t-s) \delta(z-x) \delta(\eta-\theta).
\end{align*}
We define two noise fields  $\xi^{sp'},\xi^{op'}$ via 
\begin{align*}
   \xi^{sp'}(t,z,\eta) &:=
   \frac{1}{\sqrt{N}}\sum_{k=1}^d \sum_{l,g=1}^d
        \frac{\partial}{\partial z_k}\frac{\partial}{\partial x_k}\left(\sigma^{l,k}_{sp}(z,\eta, \mu^N_t) \sigma_{sp}^{g,k}(z,\eta, \mu^N_t) \mu^{N}_t(z,\eta)^{\frac{1}{2}}\zeta(t,z,\eta) \right)
   \\\
   \xi^{op'}(t,z,\eta) &:=
   \frac{1}{\sqrt{N}}\frac{\partial}{\partial \eta}\frac{\partial}{\partial \theta}\left(\sigma_{op}(z,\eta, \mu^N_t) \mu^N_t(z,\eta)^{\frac{1}{2}} \zeta(t, z,\eta) \ \right).
\end{align*}
Note that these noise fields are statistically equivalent to $\xi^{sp}, \xi^{op}$.
Altogether, we conclude that the dynamics of the empirical measure $\mu^N$ of the feedback-loop dynamics with multiplicative noise given by \eqref{eq:diffusion_process_all} is described by the following  formal SPDE
\begin{equation}\label{eq:MainSPDEN}
\begin{aligned}
    \partial_t \mu^N_t 
    =
    &-\text{div}_z\left[U(z,\eta, \mu^N_t) \mu^N_t(z,\eta) \right]
    -\text{div}_\eta \left[ V(z,\eta, \mu^N_t) \mu^N_t(z,\eta)\right] 
    \\\
    &+ \frac{1}{2} \sum_{i,j=1}^N \left( \frac{\partial^2}{\partial z_i \partial z_j}\left(\sigma_{sp}(z,\eta, \mu^N_t) \cdot \sigma_{sp}(z,\eta, \mu^N_t)^\top\right)_{i,j} \mu^{N}_t(z,\eta) \right)
    \\\
    &+ \frac{1}{2}\frac{\partial^2}{\partial \eta^2}\left(\sigma_{op}(z,\eta, \mu^N_t)^2 \mu^N(z,\eta)\right)
    \\\
    &-
    \frac{1}{\sqrt{N}}\sum_{k=1}^d \sum_{l,g=1}^d
        \frac{\partial}{\partial z_k}\frac{\partial}{\partial x_k}\left( \sigma^{l,k}_{sp}(z,\eta, \mu^N_t) \sigma_{sp}^{g,k}(z,\eta, \mu^N_t) \mu^{N}_t(z,\eta)^{\frac{1}{2}}\zeta^k(t,z,\eta) \right)
    \\\
    &-
    \frac{1}{\sqrt{N}}\frac{\partial}{\partial \eta}\frac{\partial}{\partial \theta}\left(\sigma_{op}(z,\eta, \mu^N_t) \mu^N_t(z,\eta)^{\frac{1}{2}} \zeta^{d+1}(t, z,\eta) \ \right).
\end{aligned}
\end{equation}

\begin{Remark}
The previous equation \eqref{eq:MainSPDEN} is the generalization of the standard Dean-Kawasaki equation \cite{dean_langevin_1996} to the dynamics with the multiplicative case. In particular, in the case of the additive noise this SPDE becomes
\begin{align*}
    \partial_t \mu^N_t(x, \theta) 
    = 
    &- \text{div}_x \left[\mu^{N}_t(x, \theta) \cdot \int U(x,y,\theta, \eta)\mu^N_t(y,\eta)dy d\eta \right] \nonumber 
    \\\
    &- \text{div}_{\theta}\left[\mu^{N}_t(x, \theta) \cdot \int V(x,y,\theta, \eta)\mu^N_t(y,\eta)dy d\eta \right] 
    \\\
    &+ \frac{1}{2}\sigma_1^2 \Delta_x \mu_t^{N}(x, \theta) + \frac{1}{2}\sigma_2^2 \Delta_{\theta}\mu^{N}_t(x, \theta) \nonumber  
    \\\
    &- \frac{1}{N}\sigma_1\text{div}_x\left[\mu^N_t(x, \theta)^{\frac{1}{2}} \cdot \zeta(t,x,\theta) \right]
    - \frac{1}{N}\sigma_2 \text{div}_{\theta}\left[\mu^N_t(x, \theta)^{\frac{1}{2}} \cdot \zeta(t,x,\theta) \right] \nonumber
\end{align*}
which is the standard type of Dean-Kawasaki equation. Concerning the SPDE formulations of an ABM, observe that the diffusion part that appears in the dynamics of the empirical measure (see \cite{HelfmannDjurdjevacConradDjurdjevacetal.2021}) is exactly of this type  and the case with multiplicative noise is its generalization. 
\end{Remark}

\subsection{Numerical experiment}

We will here illustrate the behaviour of the feedback loop system at the macroscopic level, i.e. when the number of agents tend to infinity. As we showed in the previous Section, this dynamics is described by the PDE \eqref{equation:MF_lim}. We will simulate it using finite difference method with one spatial dimension and one opinion dimension. The considered domain is $[-2,2]^2$ with the grid size $0.05$ and the time interval is $[0,1]$ with time step $dt=0.0001$. We use no-flux boundary conditions. The initial conditions are randomly chosen four clusters with  normal distribution. Other parameters are chosen in a similar way as in the experiments made in Section \ref{sec:Simulation}. In particular, the potentials $U$ and $V$ are given by  \eqref{eq:OpModel} and \eqref{eq:SpModel} respectively, with the additional scaling parameter that is taken to be $0.5$ and represents scaled space/opinion interaction strength. Note that these potentials do not satisfy our regularity Assumption \eqref{IDassumption}, however, as already explained, from the literature is expected that the PDE equation for the empirical density has the same form as \eqref{equation:MF_lim}. As in the ABM, the interaction radius is taken to be $0.15$ and we consider the additive noise with strength $0.01$ for both space and opinion dynamics. In Figure \ref{fig:PDEExample} we show the empirical density of agents from the numerical discretization of the equation \eqref{equation:MF_lim} at the initial time $t = 0$, intermediate time $t = 0.5$ and final time $t = 1$. We observe that the behavior of the ABM shown in \ref{sec:Simulation} agrees with the emerging dynamics of the PDE model. Namely, the agents empirical density show the cluster formation that is in agreement with the ABM and reflects the feedback loop dynamics of the system. Note that the diffusive behaviour depends on the choice of the scaled space/opinion strength parameter. In particular, it is expected to have stronger clustering effects with the increasing of space/opinion strength parameter. More detailed investigation of this, and the effect of the boundary conditions will be the topic for the future research. 

\begin{figure}[htb!]
	\centering
	\begin{subfigure}[b]{0.3\textwidth}
    \centering
	\includegraphics[width=\textwidth]{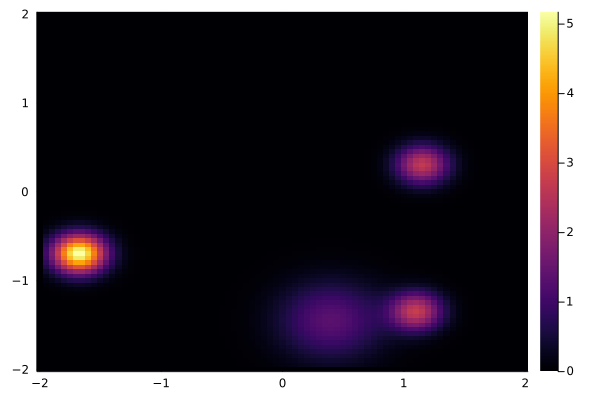}
	  \caption{$t=0$}
      \label{subfig:PDEInitTime}
     \end{subfigure}     
     \hfill
     \begin{subfigure}[b]{0.3\textwidth}
    \centering
	\includegraphics[width=\textwidth]{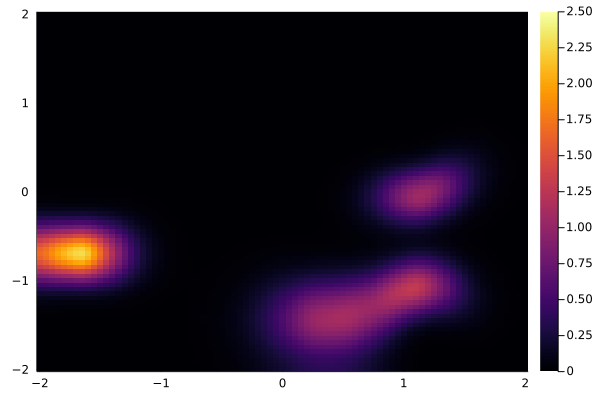}
	\caption{$t=0.5$}
    \label{subfig:PDEIntermediateTime}
     \end{subfigure}
          \hfill
    \begin{subfigure}[b]{0.3\textwidth}
    \centering
	\includegraphics[width=\textwidth]{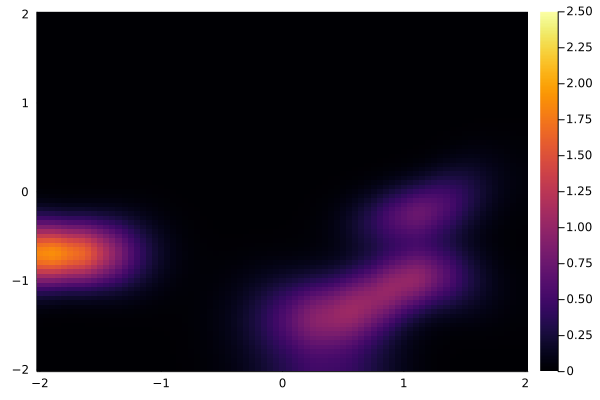}
	\caption{$t=1$}
    \label{subfig:PDEFinalTime}
     \end{subfigure}
	\caption{Empirical density of agents in mean-field limit given by equation \eqref{equation:MF_lim} at initial time $t=0$, intermediate time $t=0.5$ and final time $t=1$. 
	}
	\label{fig:PDEExample}
\end{figure}

\section{Conclusion}\label{sec:Discussion}
Literature on opinion dynamics is very rich and diverse, spanning from empirical approaches, that rely on real-world data, to mathematical models, that mostly consider simple social rules and allow for rigorous analysis. However, there is a large gap between data- and model- driven approaches, that asks for introducing new formal models that can better represent complex social mechanisms governing how people shape and share their opinions. 
As a step towards closing this gap, in this paper we introduced an agent-based model for studying the feedback loop between opinion and social dynamics. This co-evolving dynamics governs how agents position in a social space (e.g. online social media) is influencing and is being influenced by other agents opinions. Additionally, unlike most existing models for opinion dynamics that consider only deterministic dynamics or additive noise, in order to account for more realistic scenarios, we introduce the influence of a multiplicative noise. In order to explore how these different factors influence the appearance of emerging phenomena in the system, we tested our ABM model on several toy examples. In particular, we simulated the model for different parameter choices and we showed how these govern grouping of agents into spatial clusters, within which agents hold similar opinions. Our experiments have shown that cluster formation with respect to agents' social and/or opinion states is strongly influenced by the feedback loop. Further investigations of this model and possible effects the feedback loop could have in real-world social systems, will be the topic of future research.\\
Additionally, we formulated the feedback loop model in a rigorous mathematical framework and considered its behaviour in the case when the number of agents tends to infinity. Although the well-posedness propagation of chaos results have been proved for these type of systems in various weak regularity assumptions scenarios, we stated the proofs of these results in the case of the Lipschitz assumption that are easier to access and present the extension of the work presented in \cite{sznitman_topics_1991}. Finally, we derived formal equation for the empirical density of the ABM that is in the spirit of the so-called Dean-Kawasaki equation, and the equation of its hydrodynamic limit. Motivated by applications, considering the feedback loop  dynamics with more singular interaction potentials and its numerical analysis would be a natural next step and will be considered in the future.




\section*{Acknowledgments}
The authors would like to thank Christof Sch\"utte, Nicolas Perkowski and Luzie Helfmann for insightful discussions on ABM, McKean-Vlasov SDE and mean-field limit. We acknowledge the support of Deutsche Forschungsgemeinschaft (DFG) under the Germany's Excellence Strategy - The Berlin Mathematics
Research Center MATH+ (EXC-2046/1, project ID 390685689). A. Dj.  has been partially funded by Deutsche Forschungsgemeinschaft (DFG) through grant CRC
1114 "Scaling Cascades in Complex Systems", Project Number 235221301, Project C10.


\section[\appendixname~\thesection]{Appendix}
\subsection*{Appendix A.1.}
\begin{proof}[Proof of Lemma \ref{Lemma:completeMetricSpace}]
This follows from Theorem \cite[Theorem 6.18]{villani_optimal_2009}  and the fact that
\begin{align*}
    d((x,\theta), (y,\eta)) := \sup_{s\leq T} \norm{(x_s,\theta_s)-(y_s,\eta_s)}, \quad (x,\theta), (y,\eta) \in C([0,T];\R^{d+1}),
\end{align*}
where 
$     \norm{(x_s, \theta_s)} := \left(\sum_{k=1}^d \abs{x_s^k}^2 + \abs{\theta_s}^2 \right)^2,$
defines a metric on $C([0,T]; \R^{d+1})$. 
\end{proof}

\begin{proof}[Proof of Lemma \ref{lemma:a-priori-estimate}]
For $n \in \N$ consider the stopping time 
\begin{align*}
    \tau_n := \min\set{\inf\set{t \geq 0: \abs{X_t}\geq n}, \inf\set{t \geq 0: \abs{\Theta_t} \geq n}}.
\end{align*}
Then for the stopped process $(X_t^{\tau_n}, \Theta_t^{\tau_n})_{t \geq 0}$ it holds that
\begin{align*}
     \mathbb{E}\left[\sup_{t \in [0,T]}\norm{X_t^{\tau_n}}^2+\abs{\Theta_t^{\tau_n}}^2\right] 
     \leq 
     \mathbb{E}\left[\norm{X_0}^2+\abs{\Theta_0}^2\right] + 2n^2. 
\end{align*}
Via Cauchy-Schwarz and the Burkholder-Davis-Gundy inequality one can now derive the upper bound 
\begin{align*}
    \mathbb{E}\left[\sup_{t \in [0,T]}\norm{X_t^{\tau_n}}^2+\abs{\Theta_t^{\tau_n}}^2\right] 
    \leq 
    C \left(\mathbb{E}\left[\norm{X_0}^2+\abs{\Theta_0}^2\right] + K^2(T^2+ T)\right), 
\end{align*}
where the constant $C$ only depends on $T$ and $K$. The right hand side of this inequality does not depend on $n$, so we can conclude that the unstopped process $X$ also satisfies this inequality by letting $n$ tend to infinity. 
\end{proof}

\begin{proof}[Proof of Theorem \ref{theorem:existence-uniqueness}]
As already announced, the proof of the existence is based on the fixed point argument. Let us define a map $\Phi: \calP^2 \to \calP^2$ by mapping $\mu \in \calP^2$ to the law of the solution of the coupled system of SDEs
\begin{align*}
\begin{cases}
     dX^{\mu}_t 
     &= 
     \left(\int_{\mathcal{C}}U(X^{\mu}_t, y, \Theta^{\mu}_t, \theta) \mu(dy d\theta)\right)dt 
     +
    \left(\int_{\mathcal{C}}\sigma_{sp}(X_t^\mu, y, \Theta_t^\mu, \theta)\mu(dyd\theta)\right) dB^{sp}_t, 
    \\\
    d\Theta^{\mu}_t 
    &= 
    \left(\int_{\mathcal{C}}V(X^{\mu}_t, y, \Theta^{\mu}_t, \theta) \mu(dy d\theta)\right)dt 
    +
    \left(\int_{\mathcal{C}}\sigma_{op}(X_t^\mu, y, \Theta_t^\mu, \theta)\mu(dyd\theta)\right) dB^{op}_t, 
    \\\
    X^{\mu}_{t=0} &= X_0,  
    \\\
    \Theta^{\mu}_{t=0} &= \Theta_0. 
\end{cases}
\end{align*}
The existence and uniqueness of a solution to this system follows from the fact that for fixed $\mu$ the drift and diffusion coefficient are Lipschitz continuous and bounded. Moreover,  from standard a priori estimates for classical SDEs  it follows that $\Phi(\mu) \in \calP^2$.

It remains to show, that $\Phi$ is indeed a contraction on $\calP^2$. To see this, fix two probability measures $\mu^1, \mu^2 \in \calP^2$ and let $(X^1, \Theta^1),(X^2,\Theta^2)$ be the corresponding solutions, i.e. for $i=1,2$ we have 
\begin{align*}
\begin{cases}
    dX^i_t 
    &= 
    \left(\int_{\mathcal{C}}U(X^i_t, y, \Theta^i_t, \theta) \mu^i(dy d\theta)\right)dt
    + 
    \left(\int_{\mathcal{C}}\sigma_{sp}(X_t^i, y, \Theta_t^i, \theta)\mu^i(dyd\theta)\right) dB^{sp}_t, 
    \\\
    d\Theta^i_t 
    &= 
    \left(\int_{\mathcal{C}}V(X^i_t, y, \Theta^i_t, \theta) \mu^i(dy d\theta)\right)dt 
    + 
    \left(\int_{\mathcal{C}}\sigma_{op}(X_t^i, y, \Theta_t^i, \theta)\mu^i(dyd\theta)\right) dB^{op}_t. 
\end{cases}
\end{align*}
To estimate $D^2_T(\Phi(\mu^1), \Phi(\mu^2))$, we will use that the law of $(X^1, \Theta^1, X^2,\Theta^2)$ is actually a coupling of the two probability measures $\Phi(\mu^1)$ and $\Phi(\mu^2)$. Therefore, by definition of the $2$-Wasserstein distance as the infimum over all couplings we have
\begin{equation*}
        D^2_T(\Phi(\mu^1), \Phi(\mu^2))
    \leq 
    \mathbb{E}
    \left[
        \sup_{s\leq T}\norm{X^1_s -X^2_s}^2 + \abs{\Theta^1_s - \Theta^2_s}^2
    \right].
\end{equation*}
We will now set up a Gronwall-type argument to obtain an upper bound of the right-hand side of this inequality in terms of the $2$-Wasserstein distance of $\mu^1$ and $\mu^2$. 
As a first step, we use Hölder's  and Young's inequality 
to see that for fixed $s\leq T$ we have 
\begin{align*}
    |&X_s^1 - X^2_s|^2 
    \leq 
    2s \int_0^s \abs{\int_{\mathcal{C}}\!\!U(X^1_r, y_r, \Theta^1_r, \theta_r) \mu^1(dy d\theta) 
    - 
    \int_{\mathcal{C}}\!\!U(X^2_r, y_r, \Theta^2_r, \theta_r) \mu^2(dy d\theta)}^2
    ds \\
    +
    &2 
    \abs{
        \int_0^s
            \left(
                \int_{\mathcal{C}}\sigma_{sp}(X_r^1, y_r, \Theta_r^1, \theta_r)\mu^1(dyd\theta)
                -
                \int_{\mathcal{C}}\sigma_{sp}(X_r^2, y_r, \Theta_r^2, \theta_r)\mu^2(dy d\theta)
            \right)
        dB^{sp}_r
    }^2
    \end{align*}

where $y_r, \theta_r$ are the projections onto the time $r$ coordinate of the $d$-dimensional spatial component and the $1$-dimensional opinion component respectively. We obtain the analogoue estimate for $ \abs{\Theta_s^1-\Theta_s^2}^2$.

Utilizing then   Doob's $L^2-$inequality to estimate the stochastic part and Lipschitz-type inequality (\ref{wasserstein-lipschitz-inequality}), enable us to apply  Gronwall's lemma which implies that  for all $t \in [0,T]$ we have

\begin{align} \label{Gronwall_estimate}
    \mathbb{E}
    \left[
        \sup_{s\leq t}\norm{X^1_s - X^2_s}^2+ \abs{\Theta^1_s - \Theta^2_s}^2
    \right]
    \leq 
    C \int_0^t\calW^2_{2,\R^{d+1}}(\mu^1_r, \mu^2_r)dr,
\end{align}
where the constant $C$ is given explicitly by 
\begin{align*}
    C:= 2(8 + 2T)L^2 \exp(2(8+2T)L^2) 
\end{align*}
and $\calW_{2,\R^{d+1}}$ is the usual $2$-Wasserstein distance on $\calP^2(\R^{d+1})$. 
Next observe that for two measures $\mu, \nu \in \calP^2((C([0,T]; \R^{d+1}))$ and fixed $r \in [0,T]$ we have 
\begin{align}\label{projection-inequality}
    \calW^2_{2,\R^{d+1}}(\mu_r, \nu_r) 
    \leq 
    D_r^2(\mu, \nu).
\end{align}

Applying $(\ref{projection-inequality})$ to \eqref{Gronwall_estimate} gives us 
\begin{align*}
    \mathbb{E}
    \left[
        \sup_{s\leq t}\norm{X^1_s - X^2_s}^2+ \abs{\Theta^1_s - \Theta^2_s}
    \right]
    \leq 
    C \int_0^t D^2_r(\mu^1, \mu^2)dr.
\end{align*}
Furthermore, since the law of $(X^1, \Theta^1,X^2, \Theta^2)$ is a coupling of $\Phi(\mu^1), \Phi(\mu^2)$, the definition of the  $2$-Wasserstein distance \eqref{2Wass_def} implies 
\begin{align*}
    D^2_T(\Phi(\mu^1), \Phi(\mu^2)) 
    \leq 
    \mathbb{E}
    \left[
        \sup_{s\leq T}\norm{X^1_s - X^2_s}^2+ \abs{\Theta^1_s - \Theta^2_s}
    \right]
    \leq 
    C \int_0^T D^2_r(\mu^1_r, \mu^2_r)dr.
\end{align*}
After establishing this inequality, the well-posedness follows from the  usual application of Banach's fixed point theorem. 
By Lemma \ref{lemma:a-priori-estimate} this shows that the solution is unique in law. Now if we let $\mu$ be the unique law of the solution, and let it be the drift and diffusion coefficient of (\ref{lim_system}), then we have a standard SDE with bounded and Lipschitz continuous drift and diffusion coefficients. For such systems it is well known, that pathwise uniqueness holds. Therefore we can conclude that the solution to (\ref{lim_system}) is not only unique in law but also pathwise unique.  
\end{proof}

\subsection*{Appendix A.2.}

\begin{proof}[Proof of Theorem \ref{theorem:convergence-microscopic-model}]
For fixed $t\in [0,T]$ we have by Hölder's inequality
\begin{align*}
    &\norm{X^{i,N}_t - \overline{X}^i_t}^2 
    \\\
    \leq 
    &2t\int_0^t 
        \norm{
            \int_{\mathcal{C}}U(X^{i,N}_s, y, \Theta^{i,N}, \theta)\nu_N(dy d\theta) 
            -
            \int_{\mathcal{C}}U(\overline{X}^i_s, y, \overline{\Theta^{i,N}}, \theta)\mu(dy d\theta) 
        }^2
    ds
    \\\
    + 
    &2\norm{
        \int_0^t
            \left(
            \int_{\mathcal{C}}\sigma_{sp}(X^{i,N}_s, y, \Theta^{i,N}, \theta)\nu_N(dy d\theta) 
            -
            \int_{\mathcal{C}}\sigma_{sp}(\overline{X}^i_s, y, \overline{\Theta^{i,N}}, \theta)\mu(dy d\theta) 
            \right)
        dB^{sp}_s
    }^2.
\end{align*}
We obtain the analogue estimate for $\abs{\Theta^{i,N}_t -\overline{\Theta}_t^i}^2$.

Similarly as before, taking the supremum of previous estimates and integrating with respect to $\mathbb{P}$ and  Doob's $L^2$-inequality  and the Lipschitz type inequality (\ref{wasserstein-lipschitz-inequality}) we obtain
\begin{align*}
    &\mathbb{E}\left[
    \sup_{0\leq s \leq t}\norm{X^{i,N}_s - \overline{X}^i_s}^2 + \abs{\Theta^{i,N}_s - \overline{\Theta}^i_s}^2
    \right]
   \\\
    \leq 
    &2(2t + 8)L^2 
    \mathbb{E}\left[
        \int_0^t
            \norm{X^{i,N}_s - \overline{X}^i_s}^2
            +
            \abs{\Theta^{i,N}_s - \overline{\Theta}^i_s}^2
            +
            \calW_{2,\R^{d+1}}^2(\mu^N_s, \mu_s)
        ds.
    \right],
\end{align*}
where $\calW_{2,\R^{d+1}}(\cdot, \cdot)$ is the usual $2$-Wasserstein metric on the set of probability measures on $\R^{d+1}$ with finite second moment. 
The application of Gronwall's inequality and the analogue arguments as in the proof of Theorem 1, yield the estimate
\begin{align*}
    \mathbb{E}\left[
    \sup_{0\leq s \leq t}\norm{X^{i,N}_s - \overline{X}^i_s}^2 + \abs{\Theta^{i,N}_s - \overline{\Theta}^i_s}^2
    \right]
    \leq
    C\mathbb{E}\left[
        \int_0^t D_s^2(\mu, \mu^N)ds
    \right]. 
\end{align*}
By construction we know that $(\overline{X}^i, \overline{\Theta}^i)_{i \in \N}$ is an $i.i.d.$ family with $\text{Law}(\overline{X}^i, \overline{\Theta}^i) = \mu$ for all $i \in \N$. We define the empirical measure of the $(\overline{X}^i, \overline{\Theta}^i)$'s by 
\begin{align}
    \overline{\mu}^N := \frac{1}{N}\sum_{i=1}^N\delta_{(\overline{X}^i, \overline{\Theta}^i)}. 
\end{align}
Note that the random measure $\frac{1}{N}\sum_{i=1}^N\delta_{(X^{i,N}, \Theta^{i,N}, \overline{X}^i, \overline{\Theta}^i)}$ is a random coupling of the random measures $\mu^N$ and $\overline{\mu}^N$. 
Therefore the definition of the truncated $2$-Wasserstein distance gives us the pathwise inequality
\begin{align}
    D_t^2(\mu^N, \overline{\mu}^N) 
    \leq 
    \frac{1}{N}\sum_{i=1}^N 
            \sup_{0\leq s \leq t }\left(\norm{X^{i,N}_s - \overline{X}^i_s}^2
            +
            \abs{\Theta^{i,N}_s - \overline{\Theta}^i_s}^2
            \right).
\end{align}
Hence,  by our previous considerations we have
\begin{align}
    \mathbb{E}\left[D_t^2(\mu^N, \overline{\mu}^N)\right]
    \leq 
    C\mathbb{E}\left[
        \int_0^t D_s^2(\mu, \mu^N)ds
    \right].
\end{align}
Moreover, combining this with the triangle inequality for the truncated $2$-Wasserstein distance \eqref{2Wass_def}, we obtain 
\begin{align*}
    \mathbb{E}\left[D_t^2(\mu, \mu^N)\right]
    &\leq 
    2\mathbb{E}\left[D_t^2(\mu, \overline{\mu}^N)\right]
    +
    2C 
    \mathbb{E}\left[
        \int_0^t D_s^2(\mu, \mu^N)ds
    \right].
\end{align*}
Now we can apply Gronwall's inequality and take $t=T$ to get 

\begin{align*}
    \mathbb{E}\left[D^2_T(\mu, \mu^N)\right]
    \leq 
    2e^{2 C T}
    \mathbb{E}
    \left[
        D_T^2(\mu, \overline{\mu}^N)
    \right],
\end{align*}
where the right hand side tends to $0$ as $N\to \infty$ by the LLN for empirical measures of  i.i.d. random variables, cf. \cite[Theorem 11.4.1]{dudley_real_2002}. 
\newline 
Since this LLN only gives us the almost sure convergence of the random measures $\mu^N \to \mu$, 
it remains to show that the family $(D_T^2(\mu, \mu^N))_{N\in \N}$ is uniformly integrable.
 
This can be shown by observing that  
\begin{align}
    D_T^2(\mu, \overline{\mu}^N) 
    \leq 2 D^2_T(\mu, \delta_0) + 2 \calW^2_2(\delta_0, \overline{\mu}^N) 
    \leq 2 D^2_T(\mu, \delta_0) + 2 \frac{1}{N}\sum_{j=1}^N \sup_{s\leq T}\norm{(\overline{X}^j_s, \overline{\Theta}^j_s)}^2,
\end{align}
where $\delta_0$ is the Dirac zero measure on $C([0,T]; \R^{d+1})$.
The right hand side of this inequality is uniformly integrable, since the first term does not depend on $N$ 
and the family $(\frac{1}{N}\sum_{j=1}^N \sup_{s\leq T}\norm{(\overline{X}^j_s, \overline{\Theta}^j_s)}^2)_{N\in \N}$ is uniformly integrable as the empirical average of i.i.d. copies of an integrable random variable. Then the uniform integrability of the empirical averages follows directly from the definition. 

\end{proof}


\bibliography{Literature}
\bibliographystyle{unsrt}

\end{document}